\newcommand{\C}[1]{\ensuremath{\mathcal{#1}}}
\newtheorem{theorem}{Theorem}[section]
\newtheorem{prop}{Proposition}[section]
\newtheorem{defn}{Definition}[section]
\begin{document}

\title{Eigenmodes of a Laplacian on some Laakso Spaces}
\author{Kevin Romeo and Benjamin Steinhurst}
\maketitle
\begin{abstract}
We analyze the spectrum of a self-adjoint operator on a Laakso space using the projective limit construction originally given by Barlow and Evans. We will use the hierarchical cell structure induced by the choice of approximating quantum graphs to calculate the spectrum with multiplicities. We also extend the method for using the hierarchical cell structure to more general projective limits beyond Laakso spaces.

MCS: 34L40 (primary); 34L16; 54B30

\end{abstract}

\section{Introduction}\label{sec:intro}
Laakso's spaces were first introduced in \cite{Laakso2000} to give examples of spaces that have nice analytic properties of any arbitrary dimension greater than one. Among these properties is having a sufficient supply of rectifiable curves connecting pairs of points to not give positive capacity to single points. In \cite{BarlowEvans2004}, Barlow and Evans present a construction of a projective limit space with sufficient conditions that Markov processes can be constructed on the limit space and mention that their construction should be able to create Laakso's spaces as well. Then in \cite{Steinhurst2009} that assertion was proved in detail as well as the construction of a Laplacian operator constructed from both the minimal generalized upper gradients provided in \cite{Laakso2000} and from a Markov process constructed as in \cite{BarlowEvans2004}. In this paper we work in terms of Barlow and Evans construction because their construction as the projective limit of quantum graphs (Sections \ref{sec:quantum} and \ref{sec:proj}) makes describing the action of the Laplacian simpler (Section \ref{sec:lap}).

Working on the approximating graphs allows us to numerically compute eigenvalues and eigenfunctions of the Laplacian on each graph and from the projective system of graphs (Section \ref{sec:proj}) an eigenfunction on one approximating graph can be extended to a corresponding eigenfunction on a more detailed approximating graph so that the spectra of the approximating Laplacians are building up towards the spectrum of the Laplacian on the limit space. This is proved in Section \ref{sec:spectrum}. An account of how the numerical calculations are performed is given in Sections \ref{sec:desc}, \ref{sec:lap}. We begin with a few words on quantum graphs in Section \ref{sec:quantum} and the spectrum of limit 

\section{Quantum Graphs}\label{sec:quantum}
A metric graph is a graph which not only has a metric giving the distance between vertices but can also give distances between points along the edges. Such a graph consist of a set of vertices and a set of edges connecting vertices. A pair of vertices can have more than one edge connecting them, and any vertex not connected to an edge can be ignored. Each edge is given a length, and the distance between any two points is the infimum of the lengths of the paths connecting the two points. Such graphs are best visualized as a series of wires connected together at the vertices where interesting phenomena can occur not just at the vertices but along the ``wires'' as well. If a Hamiltonian operator is defined on the metric graph as well it can then be called a \emph{quantum graph}, the simplest Hamiltonian is the standard Laplacian. A good survey of quantum graphs, their properties, and the properties of the Hamiltonians on them is \cite{Kuchment2004} followed up in \cite{Kuchment2005}.

The Hamiltonian need not be an exotic operator, since a quantum graph is primarily a collection of intervals operators can be built out of one dimensional operators and have their action remain intuitive. A simple operator, and the one used in this paper, is the negative second derivative, 
$$Af(\cdot) = -\frac{d^2f}{dx^2}(\cdot),$$ 
The domain of this operator are those functions on the graph that are twice differentiable on each line segment and such that the directional first derivatives at a vertex sum to zero. This vertex condition is known as (Neumann-)Kerchoff matching conditions and force the boundary terms to vanish when using integration by parts to check the self-adjointness of the operator. If the degree of the vertex is one then it is a boundary vertex and the matching condition reduces to a Neumann boundary condition. We will consider this Laplacian on a series of quantum graph approximations to Laakso spaces where the Laplacians also approximate the Laplacian on the limit space.

Hamiltonians are self-adjoint operators not merely symmetric ones, see \cite{AkkermansEtAll2000} for examples in the physics literature. This means that care has to be taken in identifying their domains. For instance, second differentiation on the unit interval is a common Hamiltonian but the domain has to be restricted by boundary conditions, classical ones being Dirichlet and Neumann. However on a quantum graph there are vertices with many line segments meeting. At these points there are more choices for boundary conditions, zero (Dirichlet), zero derivative (Neumann), one sided derivatives summing to zero (Kirchoff), and so on \cite{Kuchment2004,Kuchment2005}. In Section \ref{sec:lap} we discuss which vertex conditions are most appropriate to these fractals. 

\section{Projective Limits}\label{sec:proj}
The Laakso spaces that we will be analyzing Laplacians on are constructed as projective limits. The original construction was given in \cite{BarlowEvans2004}, shown to produce Laakso spaces in \cite{Steinhurst2009}, and background for projective limits in the category of topological spaces is given in \cite{HockingYoung1988} and \cite{Bourbaki2004}. We start with the definitions of a projective system and the projective limit space and then move on to giving the spectrum of an operator on the projective limit space.

\begin{defn}\label{def:projsys}
A \emph{projective system of topological spaces}, $(F_i,\phi_{i+1,i})_{i=0}^{\infty}$, is a family of spaces and surjective maps where the $F_i$ are topological spaces and $\phi_{i+1,i}:F_{i+1} \rightarrow F_i$ are continuous.
\end{defn}

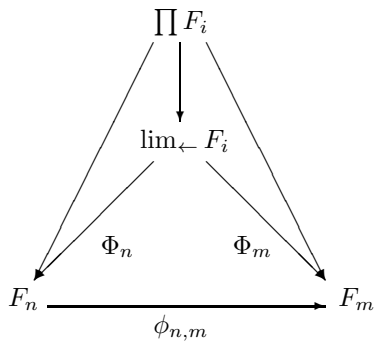
\begin{figure}[t]\centering
\begin{picture}(200,120)
\put(90,115){$\prod F_i$}
\put(85,70){$\lim_{\leftarrow} F_i$}
\put(100,110){\vector(0,-1){30}}
\put(90,110){\vector(-1,-2){45}}
\put(110,110){\vector(1,-2){45}}
\put(90,65){\vector(-1,-1){45}}
\put(110,65){\vector(1,-1){45}}
\put(35,10){$F_n$}
\put(160,10){$F_m$}
\put(50,10){\vector(1,0){105}}
\put(90,00){$\phi_{n,m}$}
\put(70,30){$\Phi_n$}
\put(120,30){$\Phi_m$}
\end{picture}
\caption{Summary of the Projective System, $n > m \ge 0$}
\label{ProjSystem}
\end{figure}

\begin{defn}\label{def:projlim}
The \emph{projective limit of a projective system}, $\lim_{\leftarrow} F_i \subset \prod_{i=0}^{\infty} F_i$ is a topological space. An element $\{x_i\} \in \lim_{\leftarrow}F_i$ if $x_i \in F_i$ and $\phi_{i,i-1}x_i = x_{i-1}$ for all $i \ge 0$ and the maps $\Phi_j:\lim_{\leftarrow}F_i \rightarrow F_j$ are all continuous surjections.
\end{defn}

Projective limits possess a version of the ``universal property.'' This means that if another space $L$ was another topological space satisfying the diagram in Figure \ref{ProjSystem} then all of the maps ``factor'' through $\lim_{\leftarrow}F_i$ inducing a continuous surjection from $L$ to $\lim_{\leftarrow}F_i$. This gives a sense in which the projective limit space is minimal because it is the smallest space which has all of the $\Phi_j$ surjective. The diagram showing this is Figure \ref{UnivProp}, where $L$ is the other candidate for a limit space and $\tilde{\Phi}_j$ the set of surjections belonging to $L$. By the universal property of the projective limit space we have that $\tilde{\Phi}_j = \Phi_j \circ \eta$ for all $J \ge0$ and some $\eta: L \rightarrow \lim_{\leftarrow}F_i$ a continuous surjection.

\begin{figure}[t]\centering
\begin{picture}(200,150)
\put(150,140){$\prod F_i$}
\put(150,80){$\lim_{\leftarrow} F_i$}
\put(30,80){$L$}
\put(160,130){\vector(0,-2){40}}
\put(45,83){\vector(1,0){100}}
\put(95,87){$\eta$}
\put(30,10){$F_n$}
\put(155,10){$F_m$}
\put(35,75){\vector(0,-1){50}}
\put(35,75){\vector(2,-1){110}}
\put(20,45){$\tilde{\Phi}_n$}
\put(65,60){$\tilde{\Phi}_m$}
\put(45,13){\vector(1,0){100}}
\put(160,75){\vector(0,-1){50}}
\put(160,75){\vector(-2,-1){110}}
\put(110,60){$\Phi_n$}
\put(170,45){$\Phi_m$}
\put(90,00){$\phi_{n,m}$}
\end{picture}
\caption{Use of the Universal Property}
\label{UnivProp}
\end{figure}
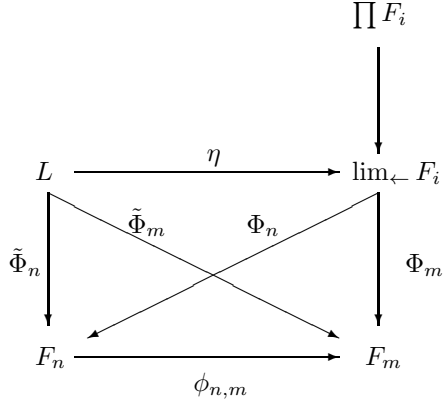

We need not only a topology on the limit space, $\lim_{\leftarrow} F_i$, but also a measure. In \cite{Bourbaki2004} sufficient conditions for the existence of a measure on the limit space are given. They are:
\begin{enumerate}
	\item For $A \subset F_i$ $\mu_i$-measurable that $\int_{F_i} \mathbb{I}_{A}\ d\mu_i = \int_{F_{i+1}} \mathbb{I}_{A} \circ \phi_{i+1,i}\ d\mu_{i+1}$.
	\item The total masses of the measures $\mu_i$ are bounded.
\end{enumerate}
Given these two conditions there exists a limit measure $\mu_{\infty}$ with the property $$\int_{F_i} \mathbb{I}_{A}\ d\mu_i = \int_{F_{i+1}} \mathbb{I}_{A} \circ \Phi_i\ d\mu_{\infty}.$$ This condition forces the sequence of measures $\mu_i$ to be mutually compatible enough that the sequence of measures $\Phi_i^{*}\mu_i(A) := \int \mathbb{I}_A \circ \Phi_i\ d\mu_{\infty}$, with the integral taken over $\lim_{\leftarrow}F_i$, converge to the measure $\mu_{\infty}$ weakly. 

Fix a projective system of quantum graphs, $\{F_n\}_{n=o}^{\infty}$ with measures, $\mu_n$. Let $\Phi_n:\lim_{\leftarrow} F_i \rightarrow F_n$ where $\lim_{\leftarrow}F_i$ is projective limit of $F_n$. It is possible to pull back functions on $F_n$ to functions on $\lim_{\leftarrow}F_i$ by pre-composing with $\Phi_n$. I.e. if $f:F_n \rightarrow \mathbb{R}$ then $\Phi_n^{*}f := f \circ \Phi_n : \lim_{\leftarrow}F_i \rightarrow \mathbb{R}$. 
 
\begin{defn}\label{def:A_n}
Let $A_n$ be a self-adjoint operator on $F_n$ with domain $Dom(A_n)$ consisting of functions on $F_n$ that are twice differentiable on each line segment and and also have the property that at each vertex the one-sided first derivatives along each edge incident to the vertex sum to zero. These are the Kirchoff matching conditions.
\end{defn}

\begin{defn}\label{def:incfam}
A family of self-adjoint operators, $(A_n,Dom(A_n))$ is \emph{increasing} if $Dom(A_n) \subset Dom(A_{n+1})$ and $A_{n+1}f = A_nf$ for all $f \in Dom(A_n)$ and all $n \ge 0$.
\end{defn}

\begin{prop}
The operators $A_n$ with domains 
$$\C{D}_n=\{ f \circ \Phi_n | f \in Dom(A_n) \}$$ 
Acting by $A_n \tilde{f} = (A_n f) \circ \Phi_n$ for $\tilde{f} \in \C{D}_n$ are an increasing family of self-adjoint operators and the operator $A$ is the minimal self-adjoint extension of all of the $A_n$ and has domain $$Dom(A)  = \overline{\bigcup_{n=0}^{\infty} \C{D}_n}.$$ 
\end{prop}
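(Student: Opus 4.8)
The plan is to transport each $A_n$ to the limit space through the isometric pull-back $\Phi_n^{*}$, to recognize the resulting operator as a unitary copy of $A_n$ supported on a subspace $H_n \subseteq L^2(\lim_{\leftarrow}F_i,\mu_\infty)$, and then to identify $A$ with the closure of the symmetric operator obtained by amalgamating these copies over all $n$.

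First I would settle well-definedness and self-adjointness of the individual pieces. Because each $\Phi_n$ is a surjection (Definition \ref{def:projlim}), the pull-back $f \mapsto f\circ\Phi_n$ is injective, so the prescription $A_n\tilde f=(A_nf)\circ\Phi_n$ does not depend on the representative $f$. The measure compatibility recorded before Definition \ref{def:A_n}, namely $\int_{F_n}\mathbb{I}_A\,d\mu_n=\int_{\lim_{\leftarrow}}\mathbb{I}_A\circ\Phi_n\,d\mu_\infty$, extends from indicators to simple functions and then to $L^2$, giving $\|f\circ\Phi_n\|_{L^2(\mu_\infty)}=\|f\|_{L^2(\mu_n)}$. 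Thus $\Phi_n^{*}$ is an isometry of $L^2(F_n,\mu_n)$ onto the closed subspace $H_n:=\overline{\Phi_n^{*}L^2(F_n,\mu_n)}$, and conjugation by this isometry makes $(A_n,\C{D}_n)$ self-adjoint on $H_n$ with $\C{D}_n=\Phi_n^{*}\,Dom(A_n)$ exactly its domain.

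Next I would verify the increasing property. Writing $\Phi_n=\phi_{n+1,n}\circ\Phi_{n+1}$ turns any $\tilde f=f\circ\Phi_n\in\C{D}_n$ into $(\phi_{n+1,n}^{*}f)\circ\Phi_{n+1}$; by Definition \ref{def:incfam} the function $\phi_{n+1,n}^{*}f$ lies in $Dom(A_{n+1})$ and $A_{n+1}\phi_{n+1,n}^{*}f=\phi_{n+1,n}^{*}A_nf$, so $\C{D}_n\subseteq\C{D}_{n+1}$, $H_n\subseteq H_{n+1}$, and the two actions agree on $\C{D}_n$. Consequently the amalgamated operator $S$ with domain $\C{D}:=\bigcup_n\C{D}_n$ is unambiguous; since any two elements of $\C{D}$ already lie in a common $\C{D}_N$, where $A_N$ is symmetric, $S$ is symmetric.

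The core of the argument is essential self-adjointness of $S$, and here the main obstacle is density. I would use the basic criterion that $S$ is essentially self-adjoint as soon as $Ran(S\pm i)$ are both dense. Because each $(A_n,\C{D}_n)$ is self-adjoint on $H_n$, the operator $A_n\pm i$ maps $\C{D}_n$ onto all of $H_n$, so $Ran(S\pm i)\supseteq\bigcup_n H_n$. The remaining, genuinely substantive, point is that $\bigcup_n H_n$ is dense in $L^2(\lim_{\leftarrow}F_i,\mu_\infty)$: the subspaces $H_n$ are exactly the $\sigma(\Phi_n)$-measurable square-integrable functions, the $\sigma$-algebras $\sigma(\Phi_n)$ generate the Borel structure of the projective limit, and the Bourbaki compatibility conditions guarantee that the associated conditional expectations converge to the identity, i.e.\ that cylinder functions are dense. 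Granting this, $Ran(S\pm i)$ are both dense, so $A:=\overline{S}$ is self-adjoint with $Dom(A)=\overline{\bigcup_n\C{D}_n}$, the closure being taken in the graph norm. Finally, minimality is immediate: any self-adjoint $B$ extending every $A_n$ contains $S$, hence its closure $A$, and since a self-adjoint operator admits no proper symmetric extension, $B=A$; thus $A$ is the unique, and a fortiori minimal, self-adjoint extension.
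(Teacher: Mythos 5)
Your overall skeleton --- pull each $A_n$ back through the isometry $\Phi_n^{*}$, amalgamate the copies into a symmetric operator $S$ on $\bigcup_n \C{D}_n$, prove essential self-adjointness via density of the ranges of $S\pm i$, and deduce minimality from the fact that a self-adjoint operator admits no proper symmetric extension --- is sound, and it is a genuinely different route from the paper, which gives no direct argument at all but defers to the Dirichlet-form construction in Section 7 of \cite{Steinhurst2009}. However, there is a genuine gap at the step where you establish the increasing property. You assert that ``by Definition~\ref{def:incfam} the function $\phi_{n+1,n}^{*}f$ lies in $Dom(A_{n+1})$ and $A_{n+1}\phi_{n+1,n}^{*}f=\phi_{n+1,n}^{*}A_nf$.'' Definition~\ref{def:incfam} cannot deliver this: it only defines what the word \emph{increasing} means, and the first assertion of the proposition is precisely that this family \emph{is} increasing, so your citation is circular. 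What actually has to be checked --- and it is the only point in the whole argument where the concrete quantum-graph structure of Definition~\ref{def:A_n} enters --- is that the pullback under $\phi_{n+1,n}$ of a function that is twice differentiable on each edge of $F_n$ and satisfies the Kirchhoff conditions again lies in $Dom(A_{n+1})$: the pullback is edgewise $C^{2}$ because the bonding maps are edgewise isometries; the Kirchhoff sums at copies of old vertices are inherited from $f$; and at each new wormhole vertex of $F_{n+1}$ (an interior point $x$ of an edge of $F_n$ at which several copies are identified) the outgoing one-sided derivatives occur in pairs $-f'(x)$, $+f'(x)$, one pair per copy, and cancel because the pulled-back function is smooth across the wormhole inside each copy. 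Without this verification $\C{D}_n\subseteq\C{D}_{n+1}$ is unproved and your amalgamated operator $S$ is not even well defined.

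A second, smaller, hole: the density of $\bigcup_n H_n$ in $L^{2}(\lim_{\leftarrow}F_i,\mu_{\infty})$ is introduced with ``granting this.'' You correctly isolate it as the substantive measure-theoretic input and point at the right tools (the $\sigma$-algebras $\sigma(\Phi_n)$ generate the Borel structure because the projective limit topology is generated by the sets $\Phi_n^{-1}(U)$, after which a monotone class or martingale convergence argument finishes), but as written it is an assumption rather than a proof. If both holes are filled, your argument is complete and arguably more self-contained than the paper's appeal to Dirichlet forms: the range criterion plus cylinder-function density replaces the construction of a limiting closed form, and your maximal-symmetry observation in fact yields \emph{uniqueness} of the self-adjoint extension of the whole family, which is stronger than the minimality stated in the proposition.
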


\begin{proof} The proof of this statement is rather lengthy and uses Dirichlet forms so we refer to Section 7 of \cite{Steinhurst2009} for the proof.
\end{proof}

The self-adjoint operator $A$ is referred to as the \emph{projective limit} of $A_n$. We have 
$$\bigcup_{n=0}^{\infty} \C{D}_n  = \bigcup_{n=0}^{\infty} \Phi^{*}_nDom(A_n) \subset Dom(A)$$
Densely in the graph norm, $\|u\| = \|u\|_{L^{2}} + \|Au\|_{L^{2}}$, because of the projective limit construction which defines $Dom(A)$ as the minimal domain containing $\C{D}_n$ for all $n \ge0$ such that $(A,Dom(A))$ is self-adjoint. It can be checked that the graph norm closure of $\bigcup_{n=0}^{\infty} \C{D}_n$ is that domain. To ease the calculations that follow we orthogonalize the spaces $\C{D}_n$ as follows:
 \begin{eqnarray*}
 	\mathcal{D}'_0 &=& Dom(A_0)\\
	\mathcal{D}'_1 &=& \mathcal{D'}_0^{\perp} \cap \C{D}_1\\
	\mathcal{D}'_n &=& \mathcal{D'}_{n-1}^{\perp} \cap \C{D}_n.
\end{eqnarray*}
Where $\mathcal{D}_{n-1}^{\perp}$ is the orthogonal complement of $\mathcal{D}_{n-1}$ in $\mathcal{H} = L^{2}(L,\mu)$ and $\oplus_{n=0}^{\infty} \mathcal{D'}_n = \bigcup_{n=0}^{\infty} \C{D}_n.$ If one can find the spectrum of the Laplacian, $-A$, when operating on each $\mathcal{D'}_n$ then the spectrum of $-A$ when acting on its entire domain is the union of its spectrum on the $\mathcal{D'}_n$ with multiplicities adding. We have proved the following theorem.

\begin{theorem}\label{thm:orthodom}
Given a projective system of quantum graphs, if $(A,Dom(A))$ is the projective limit of the operators $(A_n,Dom(A_n))$ then 
$$\sigma(A) = \bigcup_{n=0}^{\infty} \sigma(\left. A_n\right|_{\C{D}'_n}).$$
Where multiplicities of the eigenvalues add.
\end{theorem}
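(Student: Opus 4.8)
The plan is to exhibit $A$ as an orthogonal direct sum of its restrictions to the mutually orthogonal pieces $\C{D}'_n$ and then apply the elementary spectral theory of such direct sums. Work in $\mathcal{H}=L^2(L,\mu)$ and set $\mathcal{H}_n=\overline{\C{D}'_n}$ (closure in $\mathcal{H}$). By construction the $\C{D}'_n$, and hence the $\mathcal{H}_n$, are pairwise orthogonal; since $A$ is self-adjoint its domain is dense in $\mathcal{H}$, and because $\bigcup_n\C{D}_n$ is graph-norm dense in $Dom(A)$ it is also $L^2$-dense in $\mathcal{H}$, giving $\mathcal{H}=\bigoplus_{n=0}^{\infty}\mathcal{H}_n$. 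Using that the measure-compatibility condition makes each pullback $\Phi_n^{*}:L^2(F_n,\mu_n)\to\mathcal{H}$ an isometry and that $Dom(A_n)$ is dense in $L^2(F_n,\mu_n)$, one gets $\overline{\C{D}_n}=\Phi_n^{*}L^2(F_n,\mu_n)$, which by the orthogonalization equals $\mathcal{H}_{\le n}:=\bigoplus_{k=0}^{n}\mathcal{H}_k$.

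The central step, and the one I expect to be the main obstacle, is showing that each $\mathcal{H}_n$ reduces $A$. First, for $u\in\C{D}'_n$ write $u=f\circ\Phi_n$ with $f\in Dom(A_n)$; then $Au=(A_nf)\circ\Phi_n\in\Phi_n^{*}L^2(F_n,\mu_n)=\mathcal{H}_{\le n}$. Second, for every $v\in\C{D}_{n-1}$ we have $Av\in\mathcal{H}_{\le n-1}=\overline{\C{D}_{n-1}}$, so, as $u\in\C{D}_{n-1}^{\perp}$, symmetry of $A$ on its domain gives
$$\langle Au,v\rangle=\langle u,Av\rangle=0.$$
Hence $Au\perp\C{D}_{n-1}$, therefore $Au\perp\mathcal{H}_{\le n-1}$, and together with $Au\in\mathcal{H}_{\le n}$ this forces $Au\in\mathcal{H}_{\le n}\ominus\mathcal{H}_{\le n-1}=\mathcal{H}_n$. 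The genuinely delicate part is upgrading this computation on the dense set $\C{D}'_n$ to the operator-theoretic statement that $\mathcal{H}_n$ reduces the self-adjoint $A$: one must check that the orthogonal projection onto $\mathcal{H}_{\le n}$ commutes with $A$, and that this commutation survives the passage to the graph-norm closure that defines $Dom(A)$, so that $A=\bigoplus_n\left.A\right|_{\mathcal{H}_n}$ with each summand again self-adjoint on $\mathcal{H}_n$ and no proper extension introduced.

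Granting the reduction, I would identify $\left.A\right|_{\mathcal{H}_n}$ with $\left.A_n\right|_{\C{D}'_n}$: on $\C{D}'_n\subset\C{D}_n$ the operator acts by $A(f\circ\Phi_n)=(A_nf)\circ\Phi_n$, so under the isometry $\Phi_n^{*}$ the restriction of $A$ to $\mathcal{H}_n$ is exactly $A_n$ restricted to $\C{D}'_n$. It then remains to invoke the standard fact that for an orthogonal direct sum $A=\bigoplus_n\left.A\right|_{\mathcal{H}_n}$ of self-adjoint operators the $\lambda$-eigenspace of $A$ is the orthogonal sum of the $\lambda$-eigenspaces of the summands — which is precisely the assertion that multiplicities add — and that $\sigma(A)=\overline{\bigcup_n\sigma(\left.A_n\right|_{\C{D}'_n})}$. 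The last point to reconcile with the statement is the closure: since each $F_n$ is a finite quantum graph, $\left.A_n\right|_{\C{D}'_n}$ has discrete spectrum consisting of eigenvalues, so the union is a set of eigenvalues whose closure adds only their accumulation points, and the equality is to be read with multiplicities counted over the indices $n$ at which each eigenvalue occurs.
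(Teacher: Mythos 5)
Your proposal is correct, and it takes a genuinely different (and more careful) route than the paper. The paper's own proof is only a few sentences: it recalls that $\bigcup_n \C{D}_n = \oplus_n \C{D}'_n$ is dense by the projective-limit construction of $Dom(A)$, argues that a missed eigenvalue would produce a one-dimensional subspace of $Dom(A)$ orthogonal to every $\C{D}_n$, and concludes that a complete set of eigenfunctions lies in $\oplus_n \C{D}'_n$, deferring details to Steinhurst's paper. That argument tacitly uses exactly what you make explicit: that each piece is invariant under $A$ (otherwise eigenfunctions of $A_n|_{\C{D}'_n}$ need not be eigenfunctions of $A$, and ``complete set of eigenfunctions'' is meaningless), that the restrictions are again self-adjoint with discrete spectrum, and that eigenspaces decompose along the pieces so multiplicities add; your reducing-subspace formulation delivers all of this at once. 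Moreover, the step you flag as the main obstacle does close, by the mechanism you half-name: you already have $P_nA=AP_n$ on the finite algebraic sums $\C{D}'_0\oplus\cdots\oplus\C{D}'_N$, which form a graph-norm core for $A$; given $u\in Dom(A)$, pick core elements $v_k\to u$ with $Av_k\to Au$; then $P_nv_k\to P_nu$ and $AP_nv_k=P_nAv_k\to P_nAu$, so closedness of $A$ gives $P_nu\in Dom(A)$ and $AP_nu=P_nAu$, i.e. $P_nA\subset AP_n$ and $\mathcal{H}_n$ reduces $A$. Your worry about ``no proper extension introduced'' is settled by maximality of self-adjoint operators: the restriction of $A$ to $\overline{\C{D}_n}$ is a self-adjoint extension of the unitary transplant of the self-adjoint $A_n$, hence equal to it, which also yields $Dom(A)\cap\mathcal{H}_n=\C{D}'_n$ and the identification of $A|_{\mathcal{H}_n}$ with $A_n|_{\C{D}'_n}$. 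Finally, your closing caveat is a genuine correction to the theorem as printed: for an orthogonal direct sum one gets $\sigma(A)=\overline{\bigcup_n\sigma(A_n|_{\C{D}'_n})}$, and the closure is harmless in the Laakso application only because the bottom of the nonzero spectrum of the $n$-th piece grows like $d_n^{-2}$, so the union has no finite accumulation points; the paper passes over this in silence.
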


\begin{proof} 
Above we saw that $\oplus_{n=0}^{\infty} \mathcal{D'}_n = \bigcup_{n=0}^{\infty} \C{D}_n.$.
If the union, $\bigcup_{n=0}^{\infty} \sigma(\left. A_n\right|_{\C{D}'_n})$, did not account for all of the eigenvalues of $A$ then this would imply that that there was at least a one dimensional subspace of $Dom(A)$ which is orthogonal to every $\C{D}_n$ which is not true because $A$ is defined as the projective limit of the operators, $A_n$. Thus a complete set of eigenfunctions for $Dom(A)$ can be found in $\oplus_{n=0}^{\infty} \C{D'}_n$. A more detailed version of this proof can be found in \cite{Steinhurst2009}.
\end{proof}

If the geometry of a projective limit space is well understood then it is possible to explicitly determine the orthogonalization, $\C{D'}_n$. This is done in Section \ref{sec:spectrum} in the case of Laakso spaces.

\section{Laakso Spaces}\label{sec:desc}
Laakso originally described these spaces in \cite{Laakso2000} with emphasis on showing there was a large supply of rectifiable curves and a tunable dimension. Later Barlow and Evans created a projective limit construction that could be specialized to use quantum graphs that would also produce Laakso spaces. This was verified in \cite{Steinhurst2009}. We consider only a few examples of these spaces to calculate spectra of Laplacians, however we take advantage of two ways of visualizing the process. One of the factors in determining the dimension of Laakso spaces is how many subintervals the new identifications made at each level of approximation make, we consider only the cases when this number, $j$, is fixed from one level to the next and work with $j=2,3,4,5,6,7$.

\begin{figure}[t]\center
\input{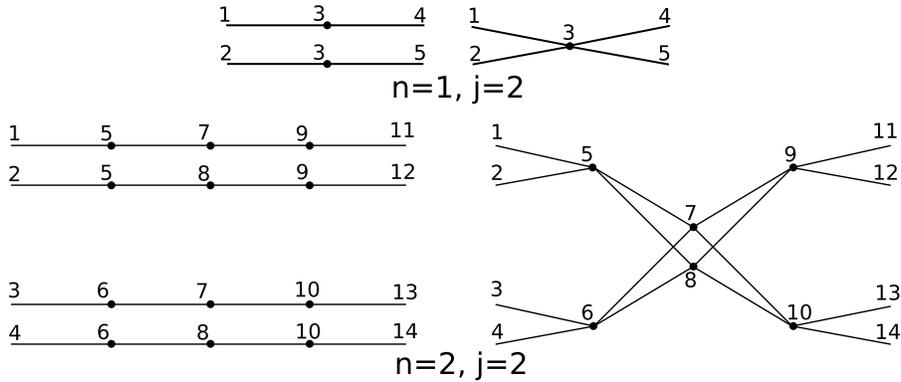}
\caption{Constructions of the simplest case, $j=2$.}\label{repfig}
\end{figure}

\begin{figure}[t]\center
\input{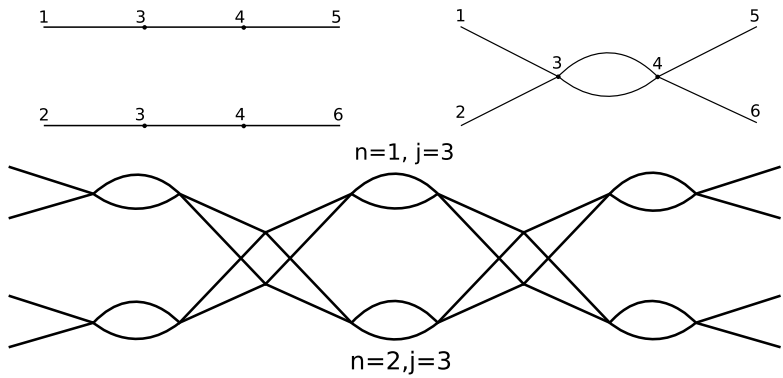}
\psset{xunit=.5pt,yunit=.5pt,runit=.5pt}
\begin{pspicture}(585.93554688,157.03961182)
{
\newrgbcolor{curcolor}{0 0 0}
\pscustom[linewidth=1.95507383,linecolor=curcolor]
{
\newpath
\moveto(0.977537,156.06207482)
\lineto(65.096678,135.55328682)
\curveto(65.096678,135.55328682)(95.224319,110.15839482)(130.194334,135.55328682)
\lineto(195.311522,106.25641182)
\lineto(260.410162,135.55328682)
\curveto(260.410162,135.55328682)(292.202062,110.57470482)(325.506832,135.55328682)
\lineto(390.624022,106.25641182)
\lineto(455.721672,135.55328682)
\curveto(455.721672,135.55328682)(487.514542,106.82791682)(520.819332,135.55328682)
\lineto(584.958012,156.06207482)
}
}
{
\newrgbcolor{curcolor}{0 0 0}
\pscustom[linewidth=1.95507383,linecolor=curcolor]
{
\newpath
\moveto(0.977537,116.99957482)
\lineto(65.096678,135.55328682)
\curveto(101.315631,168.02544572)(130.194334,135.55328682)(130.194334,135.55328682)
\lineto(195.311522,67.19391282)
\lineto(260.409182,135.55328682)
\curveto(295.795512,169.69068318)(325.506832,135.55328682)(325.506832,135.55328682)
\lineto(390.624022,67.19391282)
\lineto(455.721672,135.55328682)
\curveto(489.859082,171.35592442)(520.819332,135.55328682)(520.819332,135.55328682)
\lineto(584.958012,116.99957482)
}
}
{
\newrgbcolor{curcolor}{0 0 0}
\pscustom[linewidth=1.95507383,linecolor=curcolor]
{
\newpath
\moveto(0.977537,58.40582282)
\lineto(65.096678,37.89705282)
\curveto(98.817772,73.69968082)(130.194334,37.89705282)(130.194334,37.89705282)
\lineto(195.311522,106.25641182)
\lineto(260.409182,37.89705282)
\curveto(260.409182,37.89705282)(290.953112,9.58799282)(325.506832,37.89705282)
\lineto(390.624022,106.25641182)
\lineto(455.721672,37.89705282)
\curveto(455.721672,37.89705282)(487.098242,10.00429282)(520.819332,37.89705282)
\lineto(584.958012,58.40582282)
}
}
{
\newrgbcolor{curcolor}{0 0 0}
\pscustom[linewidth=1.95507383,linecolor=curcolor]
{
\newpath
\moveto(0.977537,19.34332282)
\lineto(65.096678,37.89705282)
\curveto(65.096678,37.89705282)(95.224319,10.42060282)(130.194334,37.89705282)
\lineto(195.311522,67.19391282)
\lineto(260.409182,37.89705282)
\curveto(294.962882,74.94860682)(325.506832,37.89705282)(325.506832,37.89705282)
\lineto(390.624022,67.19391282)
\lineto(455.721672,37.89705282)
\curveto(488.610162,74.94860682)(520.819332,37.89705282)(520.819332,37.89705282)
\lineto(584.958012,19.34332282)
}
}
{
\newrgbcolor{curcolor}{0 0 0}
\pscustom[linestyle=none,fillstyle=solid,fillcolor=curcolor]
{
\newpath
\moveto(269.12020639,8.74467642)
\lineto(269.12020639,2.62521656)
\lineto(267.45455459,2.62521656)
\lineto(267.45455459,8.69036169)
\curveto(267.45454608,9.64991494)(267.26746219,10.36807569)(266.89330235,10.84484608)
\curveto(266.51912662,11.32160004)(265.95787494,11.55998113)(265.20954564,11.55999006)
\curveto(264.3103297,11.55998113)(263.60122141,11.27332032)(263.08221862,10.7000068)
\curveto(262.56320755,10.12667712)(262.30370409,9.34514925)(262.30370745,8.35542084)
\lineto(262.30370745,2.62521656)
\lineto(260.6290032,2.62521656)
\lineto(260.6290032,12.76396663)
\lineto(262.30370745,12.76396663)
\lineto(262.30370745,11.18883939)
\curveto(262.70201173,11.79836221)(263.16972146,12.25400202)(263.70683804,12.55576016)
\curveto(264.24998006,12.85749844)(264.87459886,13.00837255)(265.58069631,13.00838293)
\curveto(266.74543778,13.00837255)(267.62654256,12.64627469)(268.2240133,11.92208828)
\curveto(268.82146548,11.20391824)(269.12019621,10.14478201)(269.12020639,8.74467642)
}
}
{
\newrgbcolor{curcolor}{0 0 0}
\pscustom[linestyle=none,fillstyle=solid,fillcolor=curcolor]
{
\newpath
\moveto(272.67782161,11.0440001)
\lineto(284.28306946,11.0440001)
\lineto(284.28306946,9.52318759)
\lineto(272.67782161,9.52318759)
\lineto(272.67782161,11.0440001)
\moveto(272.67782161,7.35059829)
\lineto(284.28306946,7.35059829)
\lineto(284.28306946,5.81168087)
\lineto(272.67782161,5.81168087)
\lineto(272.67782161,7.35059829)
}
}
{
\newrgbcolor{curcolor}{0 0 0}
\pscustom[linestyle=none,fillstyle=solid,fillcolor=curcolor]
{
\newpath
\moveto(289.80506751,4.16413398)
\lineto(296.18704858,4.16413398)
\lineto(296.18704858,2.62521656)
\lineto(287.60532084,2.62521656)
\lineto(287.60532084,4.16413398)
\curveto(288.29934037,4.88229319)(289.24381228,5.84486998)(290.43873939,7.05186726)
\curveto(291.63969308,8.26489065)(292.39406361,9.04641852)(292.70185324,9.39645321)
\curveto(293.28723832,10.05425755)(293.69459841,10.60947426)(293.92393473,11.06210501)
\curveto(294.15929066,11.52075386)(294.27697246,11.9703587)(294.27698049,12.41092087)
\curveto(294.27697246,13.12907183)(294.02350396,13.71446336)(293.51657423,14.16709722)
\curveto(293.01566493,14.619708)(292.36087131,14.84601916)(291.55219141,14.84603138)
\curveto(290.9788645,14.84601916)(290.37235059,14.74644225)(289.73264787,14.54730035)
\curveto(289.09897313,14.34813461)(288.42003966,14.0463864)(287.69584539,13.64205481)
\lineto(287.69584539,15.48875572)
\curveto(288.43210958,15.7844561)(289.12009551,16.00774978)(289.75980523,16.15863742)
\curveto(290.39950793,16.30949799)(290.98489946,16.38493504)(291.51598158,16.3849488)
\curveto(292.91608802,16.38493504)(294.03255641,16.03490712)(294.86539009,15.33486397)
\curveto(295.69820654,14.63479541)(296.11461907,13.69937595)(296.11462894,12.52860279)
\curveto(296.11461907,11.97337618)(296.0090072,11.44531681)(295.797793,10.94442309)
\curveto(295.59259467,10.4495477)(295.2154094,9.86415617)(294.66623607,9.18824674)
\curveto(294.51535355,9.01322621)(294.03557389,8.50628922)(293.22689566,7.66743423)
\curveto(292.41820347,6.83460412)(291.27759523,5.66683854)(289.80506751,4.16413398)
}
}
{
\newrgbcolor{curcolor}{0 0 0}
\pscustom[linestyle=none,fillstyle=solid,fillcolor=curcolor]
{
\newpath
\moveto(300.18823221,4.92454023)
\lineto(302.09830031,4.92454023)
\lineto(302.09830031,3.3675179)
\lineto(300.61369762,0.47073217)
\lineto(299.44593087,0.47073217)
\lineto(300.18823221,3.3675179)
\lineto(300.18823221,4.92454023)
}
}
{
\newrgbcolor{curcolor}{0 0 0}
\pscustom[linestyle=none,fillstyle=solid,fillcolor=curcolor]
{
\newpath
\moveto(305.66496941,12.76396663)
\lineto(307.33062121,12.76396663)
\lineto(307.33062121,2.44416745)
\curveto(307.3306178,1.15268528)(307.08318427,0.21726582)(306.58831987,-0.36209373)
\curveto(306.09948509,-0.94144731)(305.30890478,-1.2311256)(304.21657655,-1.23112945)
\lineto(303.58290467,-1.23112945)
\lineto(303.58290467,0.18105359)
\lineto(304.02647498,0.18105359)
\curveto(304.66014612,0.18105604)(305.09164606,0.32891266)(305.32097611,0.62462391)
\curveto(305.55030335,0.91430419)(305.66496767,1.5208181)(305.66496941,2.44416745)
\lineto(305.66496941,12.76396663)
\moveto(305.66496941,16.7108372)
\lineto(307.33062121,16.7108372)
\lineto(307.33062121,14.60161508)
\lineto(305.66496941,14.60161508)
\lineto(305.66496941,16.7108372)
}
}
{
\newrgbcolor{curcolor}{0 0 0}
\pscustom[linestyle=none,fillstyle=solid,fillcolor=curcolor]
{
\newpath
\moveto(311.02402446,11.0440001)
\lineto(322.62927231,11.0440001)
\lineto(322.62927231,9.52318759)
\lineto(311.02402446,9.52318759)
\lineto(311.02402446,11.0440001)
\moveto(311.02402446,7.35059829)
\lineto(322.62927231,7.35059829)
\lineto(322.62927231,5.81168087)
\lineto(311.02402446,5.81168087)
\lineto(311.02402446,7.35059829)
}
}
{
\newrgbcolor{curcolor}{0 0 0}
\pscustom[linestyle=none,fillstyle=solid,fillcolor=curcolor]
{
\newpath
\moveto(332.11624393,9.91244317)
\curveto(332.99130622,9.725352)(333.67325718,9.3360968)(334.16209885,8.74467642)
\curveto(334.65695635,8.15324381)(334.90438989,7.42301313)(334.9044002,6.55398221)
\curveto(334.90438989,5.22025118)(334.44573261,4.1882723)(333.52842697,3.45804246)
\curveto(332.61110347,2.72781095)(331.3075512,2.36269561)(329.61776623,2.36269535)
\curveto(329.05047457,2.36269561)(328.46508304,2.42002777)(327.86158988,2.534692)
\curveto(327.26412515,2.64332145)(326.64554131,2.80928297)(326.00583652,3.03257705)
\lineto(326.00583652,4.79780586)
\curveto(326.5127721,4.50209044)(327.06798881,4.27879676)(327.67148831,4.12792416)
\curveto(328.27498166,3.97704855)(328.90563543,3.9016115)(329.5634515,3.90161277)
\curveto(330.71008974,3.9016115)(331.58214207,4.12792265)(332.17961112,4.58054693)
\curveto(332.78309995,5.03316729)(333.08484817,5.69097839)(333.08485666,6.55398221)
\curveto(333.08484817,7.35059356)(332.80422233,7.97219488)(332.2429783,8.41878803)
\curveto(331.68775394,8.87140455)(330.91226104,9.09771571)(329.91649726,9.09772219)
\lineto(328.34137002,9.09772219)
\lineto(328.34137002,10.60042979)
\lineto(329.9889169,10.60042979)
\curveto(330.88812118,10.60042181)(331.57610711,10.77845326)(332.05287674,11.13452466)
\curveto(332.52963146,11.496614)(332.76801254,12.01562093)(332.76802072,12.69154699)
\curveto(332.76801254,13.38555781)(332.52057901,13.91663467)(332.02571937,14.28477914)
\curveto(331.53687984,14.65893527)(330.8338065,14.84601916)(329.91649726,14.84603138)
\curveto(329.4155899,14.84601916)(328.87847809,14.79170448)(328.30516019,14.68308718)
\curveto(327.73183488,14.57444577)(327.10118111,14.40546677)(326.41319701,14.17614968)
\lineto(326.41319701,15.80559166)
\curveto(327.10721608,15.99869733)(327.75597474,16.14353647)(328.35947493,16.24010952)
\curveto(328.96900255,16.33665533)(329.54232415,16.38493504)(330.07944146,16.3849488)
\curveto(331.46747775,16.38493504)(332.56584124,16.06809942)(333.37453523,15.43444098)
\curveto(334.18321166,14.80679189)(334.58755427,13.95586193)(334.58756426,12.88164855)
\curveto(334.58755427,12.13330273)(334.37331303,11.49963148)(333.94483992,10.98063291)
\curveto(333.51634811,10.4676526)(332.90681672,10.11158971)(332.11624393,9.91244317)
}
}
\end{pspicture}
\caption{Constructions of a more general case, $j=3$.}\label{constructionpic2}
\end{figure}

The first visualization of the projective limit construction of approximating quantum graphs is the account that Barlow and Evans give in \cite{BarlowEvans2004}. In this account the first graph is a unit interval, call it $F_0$. To construct $F_n$ from $F_{n-1}$ take two copies of $F_{n-1}$ (if the dimension is larger than two more copies are needed, see \cite{Steinhurst2009} for details of how many). With those two copies, identify pairs of points, i.e. wormholes, from both spaces to create a single space this will be $F_n$. It is important that the locations of the wormholes are chosen so that wormholes used for different $n$ do not occur at the same horizontal coordinate. In Figures \ref{repfig}\ and \ref{constructionpic2}\ one can see the first few steps for when $j=2$ and $j=3$. This visualization is very convenient for drawing pictures but not for the numerical approximations needed to describe a Laplacian and then to calculate its spectrum.

Another way to visualize the construction of the approximating graphs to the Laakso spaces is to think in terms of their cell structure. In general a Laakso space's cell structure is not self-similar but in the cases that we consider, where at each step we split every horizontal interval into the same number, $j$, of subintervals, the Laakso spaces do have a self-similar cell structure. Again we begin with $F_0$ being the unit interval. Now take $2j$ copies of $F_{n-1}$ arrange them in two horizontal lines of $j$ copies each. Figure \ref{constructionpic}\ shows an example of how this is done. To start with we have two copies of $F_0$ identified together at their mid-points, this is shown by the dots, to form $F_1$. To construct $F_2$ we take four copies of $F_1$ with the endpoints label so as to glue the graphs together as one would expect from the picture, but the extra step is to say that in the last part of the Figure that the points labeled ``1'' are identified together and the points labeled ``2'' are identified together. The process would them be continued to generate $F_3$. This example is when $j=2$ but in other cases one just has more instances of this process. This may seem like an inelegant mode of thought but it leads to simpler labeling schemes for the nodes when building the incidence matrices.

\begin{figure}[t]\center
\input{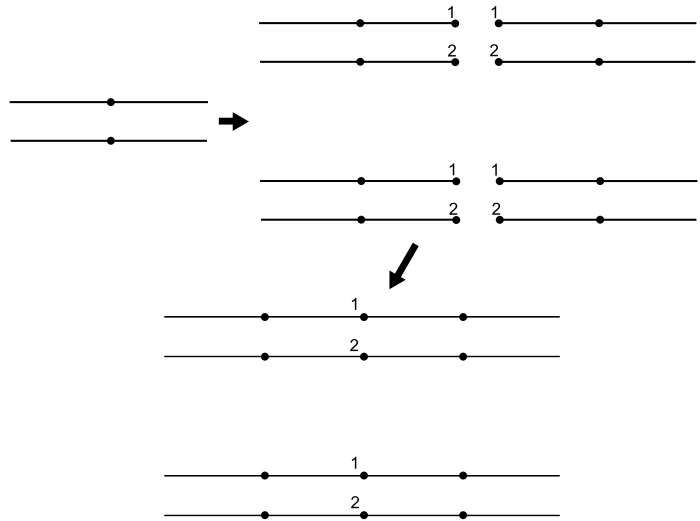}
\caption{Method of constructing the space when $j=2$}\label{constructionpic}
\end{figure}

To describe each of the approximating graphs for the numerical computations we use incidence matrices. The incidence matrix of a graph is a simple representation of how the nodes in space are connected. An entry of 0 indicates no connection while a 1 in row 1, column 2, would represent a connection between node 1 and node 2. This would also require node 2 to be connected to node 1, as the incidence matrix must be symmetric. An entry of 2 means that there are two separate edges connecting those two vertices. Many different incidence matrices are possible since the labeling of nodes is arbitrary. We assume a labeling natural from the second visualization of the construction process in the previous paragraph. The incidence matrix according to the vertex labeling scheme we use for the $n=1$, $j=2$ space is
$$\left[ \begin{array}{rrrrr}
0&0&1&0&0\\
0&0&1&0&0\\
1&1&0&1&1\\
0&0&1&0&0\\
0&0&1&0&0 \end{array} \right],$$
The incidence matrix for the $n=1$, $j=3$ case is
$$\left[ \begin{array}{rrrrrr}
0&0&1&0&0&0\\
0&0&1&0&0&0\\
1&1&0&2&0&0\\
0&0&2&0&1&1\\
0&0&0&1&0&0\\
0&0&0&1&0&0
\end{array} \right].$$
The numbering scheme used for these matrices started in the top left node and went down columns of nodes, left to right across the space. It is the same scheme as shown in Figures~\ref{repfig} and \ref{constructionpic2}. These matrices are needed to create the Laplacian matrices in the next section.

From \cite{Laakso2000} we know that our space has a Hausdorff dimension $Q$ given by $$1+\ln(2)/\ln(j)=Q.$$ Since $j$ is an integer larger than one the set of dimensions we deal with are contained in $[1,2]$. The more general construction given in \cite{Steinhurst2009} allows for different values of $j$ at each step. This is necessary to get all possible dimensions possible according to \cite{Laakso2000}. In these cases we deal with the sequence $\{j_i\}_{i=0}^{\infty}$ listing the choice of $j$ at the $i$'th step in the construction.

\section{Laplacian}\label{sec:lap}
\begin{table}[t]\center\small
$\begin{array}{cc|cc|cc|cc|cc|cc|l}
\multicolumn{2}{c|}{j=2} & \multicolumn{2}{|c|}{j=3} & \multicolumn{2}{|c|}{j=4} & \multicolumn{2}{|c|}{j=5} & \multicolumn{2}{|c|}{j=6} & \multicolumn{2}{|c|}{j=7} & Expected\\
\lambda & m&\lambda & m&\lambda & m&\lambda & m&\lambda & m&\lambda & m& \lambda\\ \hline
0&1&0&1&0&1&0&1&0&1&0&1&0\\
9.87&3&9.87&1&9.87&1&9.87&1&9.87&1&9.87&1&\pi^{2} \\
&&22.21&2&&&&&&&&&(1.5\pi)^{2}\\
39.48&3&39.48&1&39.48&3&39.48&1&39.48&1&39.48&1&(2\pi)^{2}\\
&&&&&&61.68&2&&&&&(2.5\pi)^{2}\\
88.82&3&88.83&2&88.83&1&88.83&1&88.83&3&88.83&1&(3\pi)^{2}\\
&&&&&&&&&&120.90&2&(3.5\pi)^{2}\\
157.88&18&157.91&1&157.91&3&157.91&1&157.91&1&157.91&1&(4\pi)^{2}\\
&&199.86&8& &&&&&&&&(4.5\pi)^{2}\\
246.66&3&246.74&1&246.74&1&246.74&4&246.74&1&246.74&1&(5\pi)^{2}\\
355.15&6&355.30&2&35.31&3&355.30&1&355.31&5&355.30&1&(6\pi)^{2}\\
483.31&3&483.61&1&483.61&1&483.61&1&483.61&1&483.61&6&(7\pi)^{2}\\
&&&&&&555.16&2&&&&&(7.5\pi)^{2}\\
631.15&66&&&621.65&10&&&631.65&1&631.65&1&(8\pi)^{2}\\
798.63&3&&&799.43&1&&&799.44&3&&&(9\pi)^{2}
\end{array}$
\caption{Calculated Values of the first 10 Eigenvalues for $j=2,3,4,5,6,7$ with multiplicity, $m$, and the Expected value, $\lambda$.}\label{tab:eigs}
\end{table}

In this section we describe how we numerically approximate the Laplacian in Definition \ref{def:A_n}. On each approximating graph an operator acts on a finite dimensional space of functions and so can be given a matrix representation. The matrix of the Laplacian is generated by simple manipulations of the incidence matrix and is constructed to follow the second difference quotient formula, 
$$u^{''}(x)=\lim_{h \to 0}\frac{u(x+h)-2u(x)+u(x-h)}{h^2}.$$ 
However, as most nodes are connected to two nodes on each side, we took the average of the second derivative along the two paths. To construct the Laplacian's matrix representation from the incidence matrix, scale each row so that the row sum is $-2$ and then set all diagonal entries to $2$. Now the row sum is always zero.  If one considers functions depending only on the horizontal coordinate, i.e. a function on the unit interval, this Laplacian is coincides with the negative second derivative on the unit interval. This representation also  imposes  Neumann boundary conditions at the endpoints by reflecting the value at the points just inside the boundary of the space to the points just outside. This makes the ``first derivative'' zero on all boundary points of the space and allows the second derivative to be approximated there. The entire matrix is then scaled by $1/h^2$, where $h$ is the distance between nodes or wormholes in the space. This distance decreases with increasing $n$ such that the total width of the graph is 1 and $h$ is given by $h(n)=1/j^n.$ The algorithmic description is desirable here to indicate how the calculations were actually performed.

The Laplacian matrix for the $n=1$, $j=2$ space is
$$M_{1,2} = \left[ \begin{array}{rrrrr}
8&0&-8&0&0\\
0&8&-8&0&0\\
-2&-2&8&-2&-2\\
0&0&-8&8&0\\
0&0&-8&0&8 \end{array} \right].$$
The Laplacian matrix for the $n=1$, $j=3$ space is
$$M_{1,3} = \left[ \begin{array}{rrrrrr}
18&0&-18&0&0&0\\
0&18&-18&0&0&0\\
-4.5&-4.5&18&-9&0&0\\
0&0&-9&18&-4.5&-4.5\\
0&0&0&-18&18&0\\
0&0&0&-18&0&18
\end{array} \right].$$
The notation is $M_{n,j}$ is the Laplacian matrix for the $n'th$ level approximating graph to the Laakso space constructing using the value of $j$.

\begin{figure}[t]\center
\subfigure[$j=2$]{\includegraphics[scale=0.4]{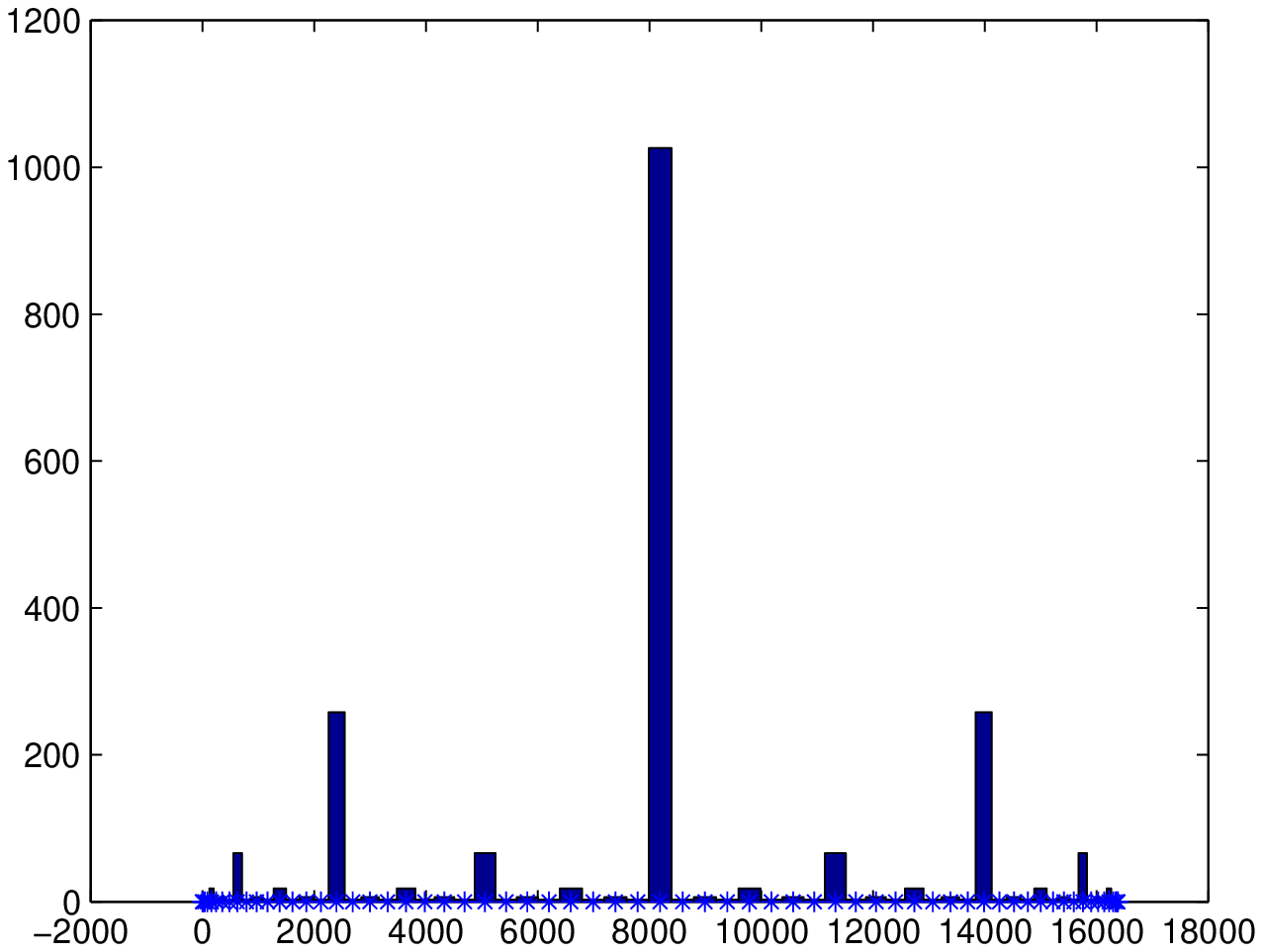}}
\subfigure[$j=3$]{\includegraphics[scale=0.4]{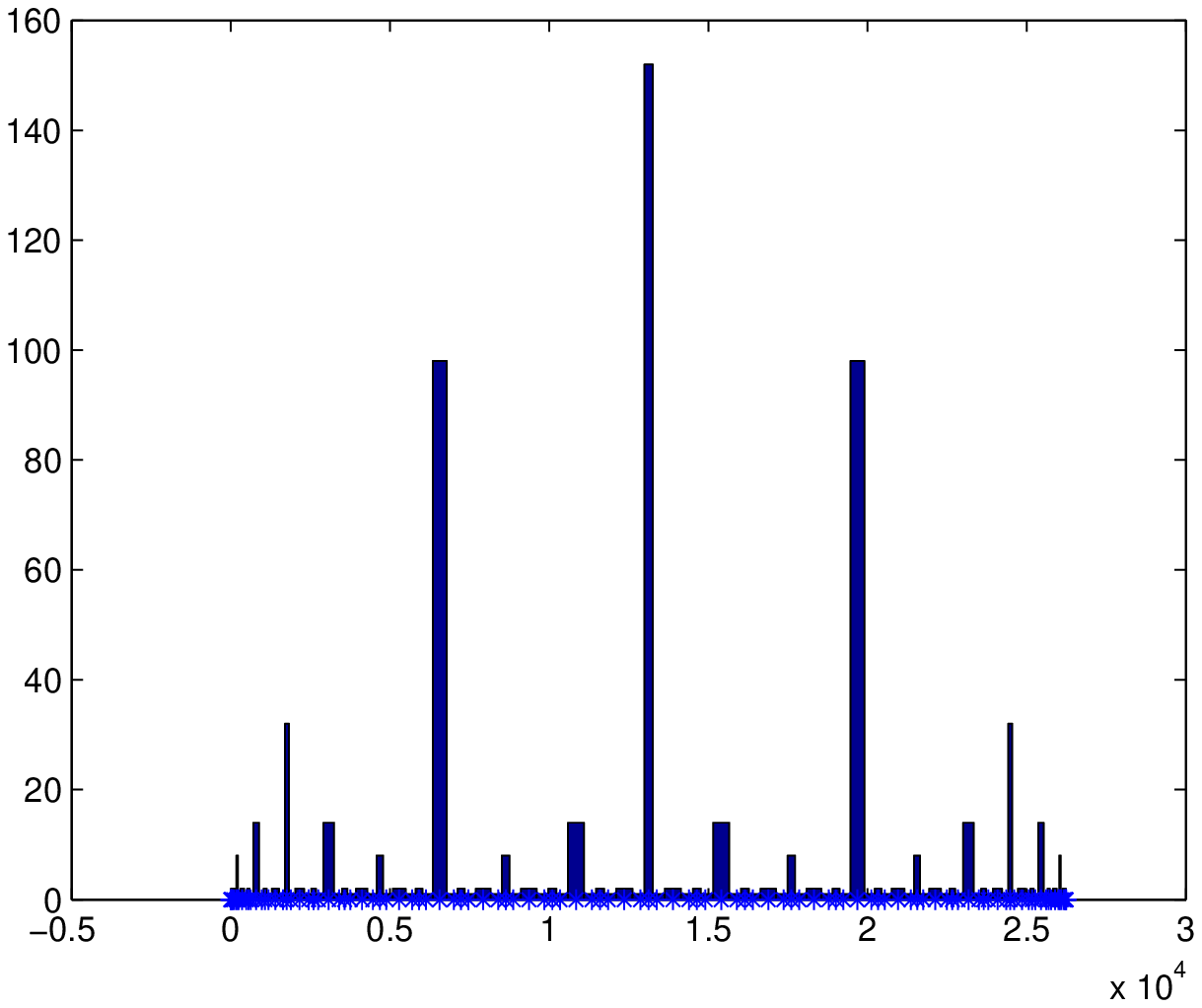}}
\subfigure[$j=4$]{\includegraphics[scale=0.4]{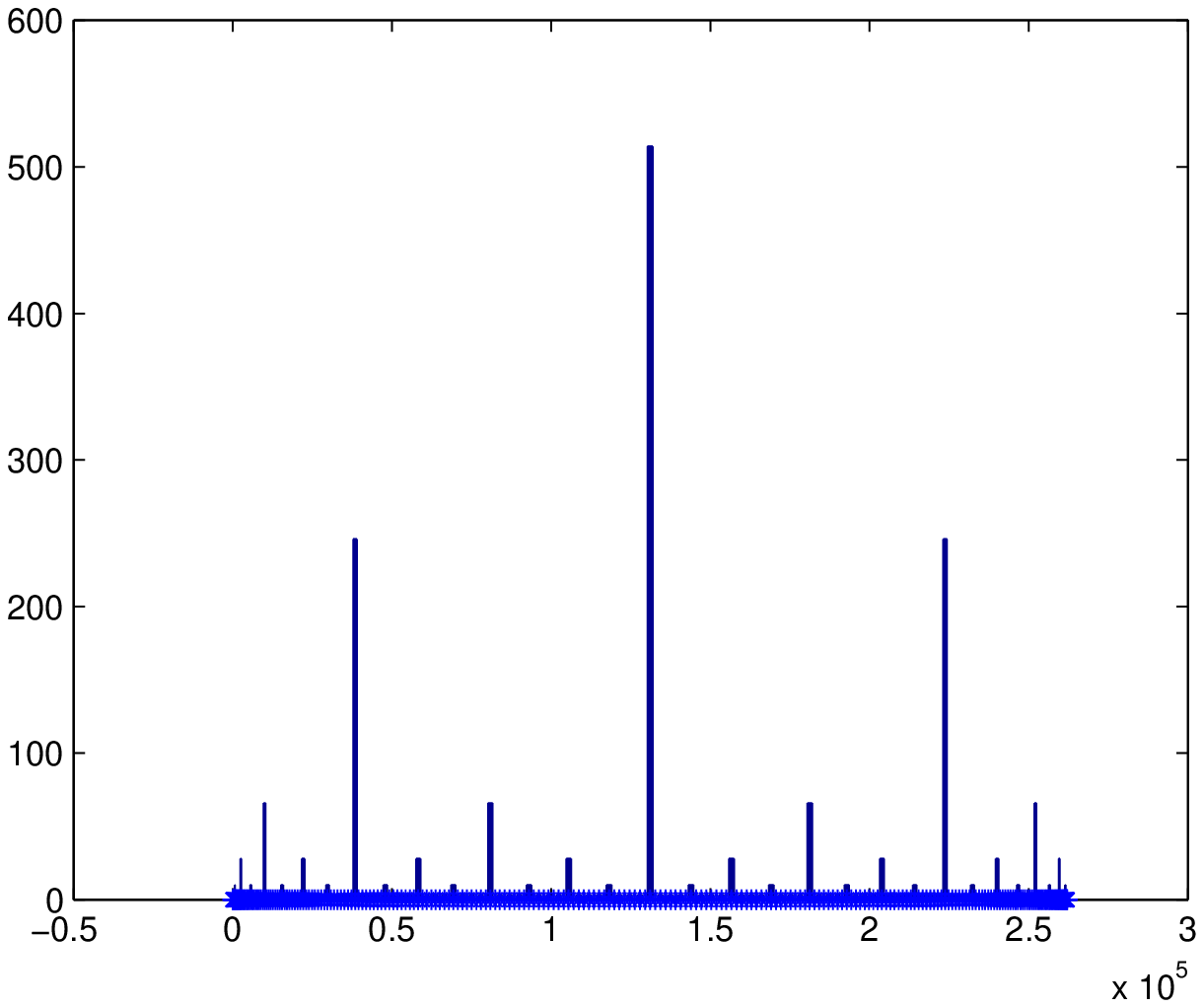}}
\subfigure[$j=5$]{\includegraphics[scale=0.4]{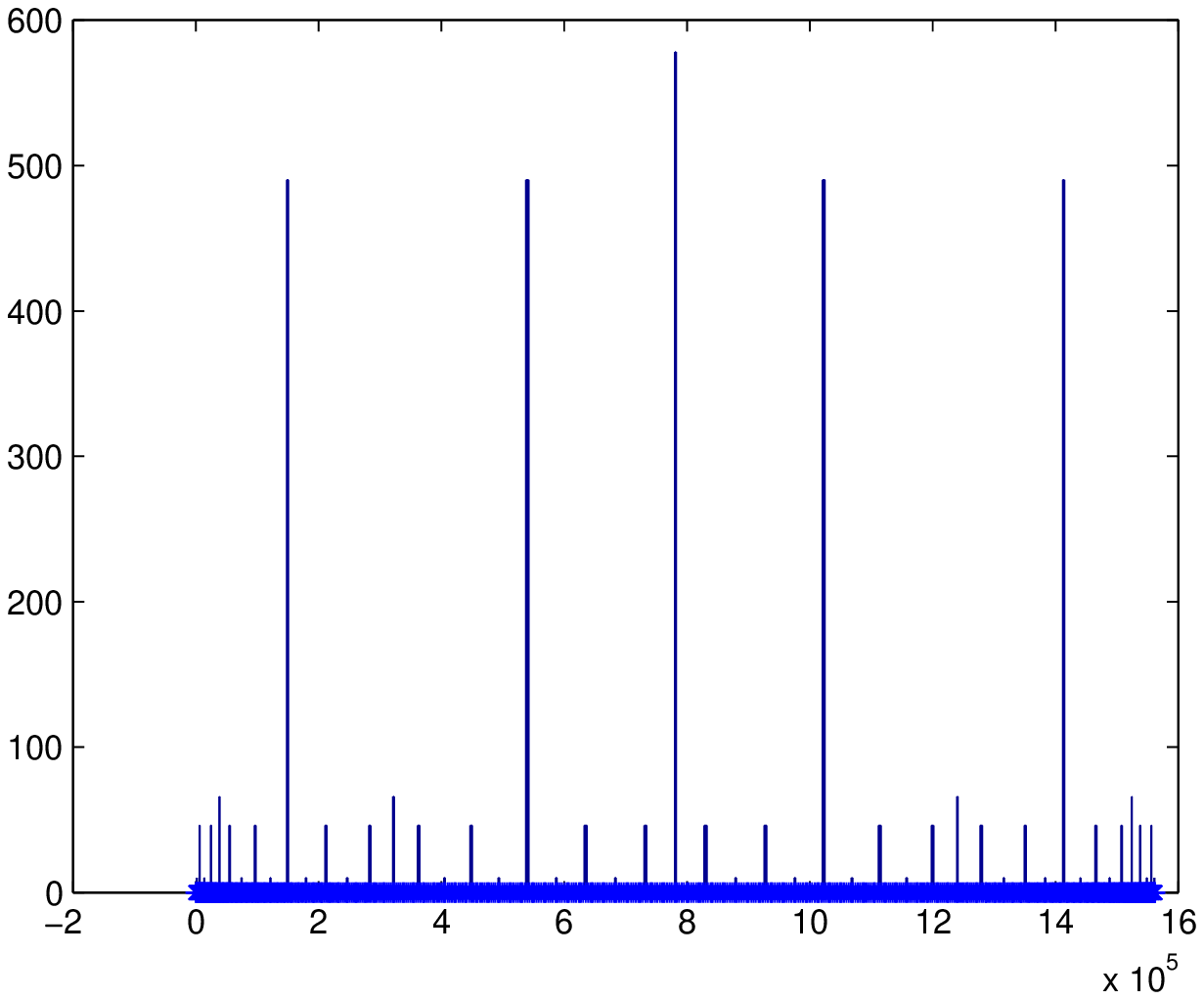}}
\caption{Multiplicities of eigenvalues when $j$ is $2,3,4$, and $5$ respectively, $n=4$ except for the (a) where $n=6$}\label{fig:hist}
\end{figure}

This algorithm was implemented in MATLAB. It first produces the incidence matrix from recursive patterns emerging from the labeling scheme for the nodes. Then the Laplacian matrix is constructed using the process outlined above. Since these matrices are well structured and very sparse, no row has more than five entries compared to thousands of zeroes, we were able to find the thousand smallest eigenvalues numerically using the MATLAB \emph{eigs} command which is based on ARPACK, a package that implements an Implicitly Restarted Arnoldi Method. More information on ARPACK can be found in its users guide, \cite{Arpack1998}. Even using sparse storage techniques the computations are impractical on a personal computer much past $n=9$ when $j=2$.

\begin{figure}[t]\center
\subfigure[$\lambda=9.74$]{\includegraphics[scale=0.4]{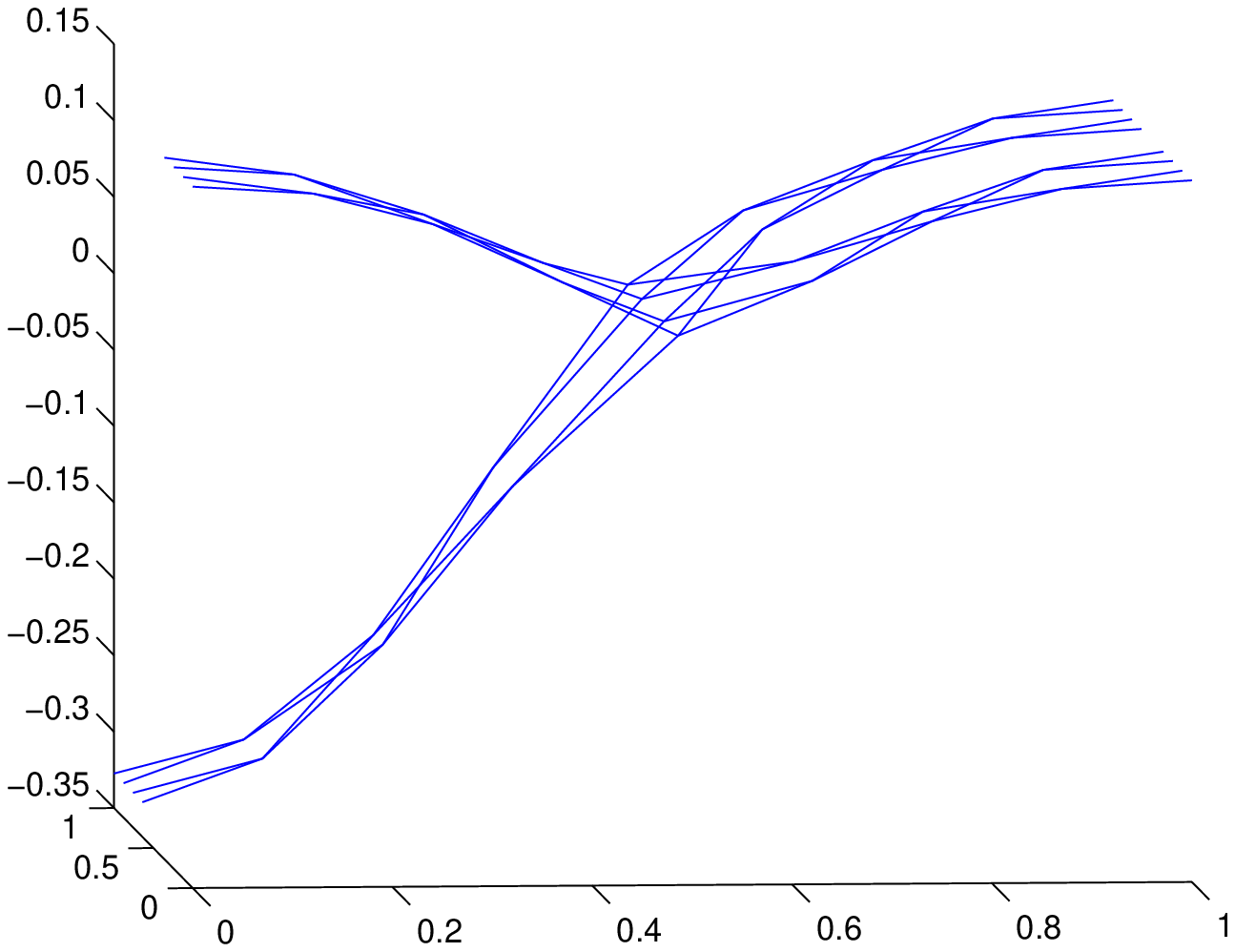}}
\subfigure[$\lambda=37.49$]{\includegraphics[scale=0.4]{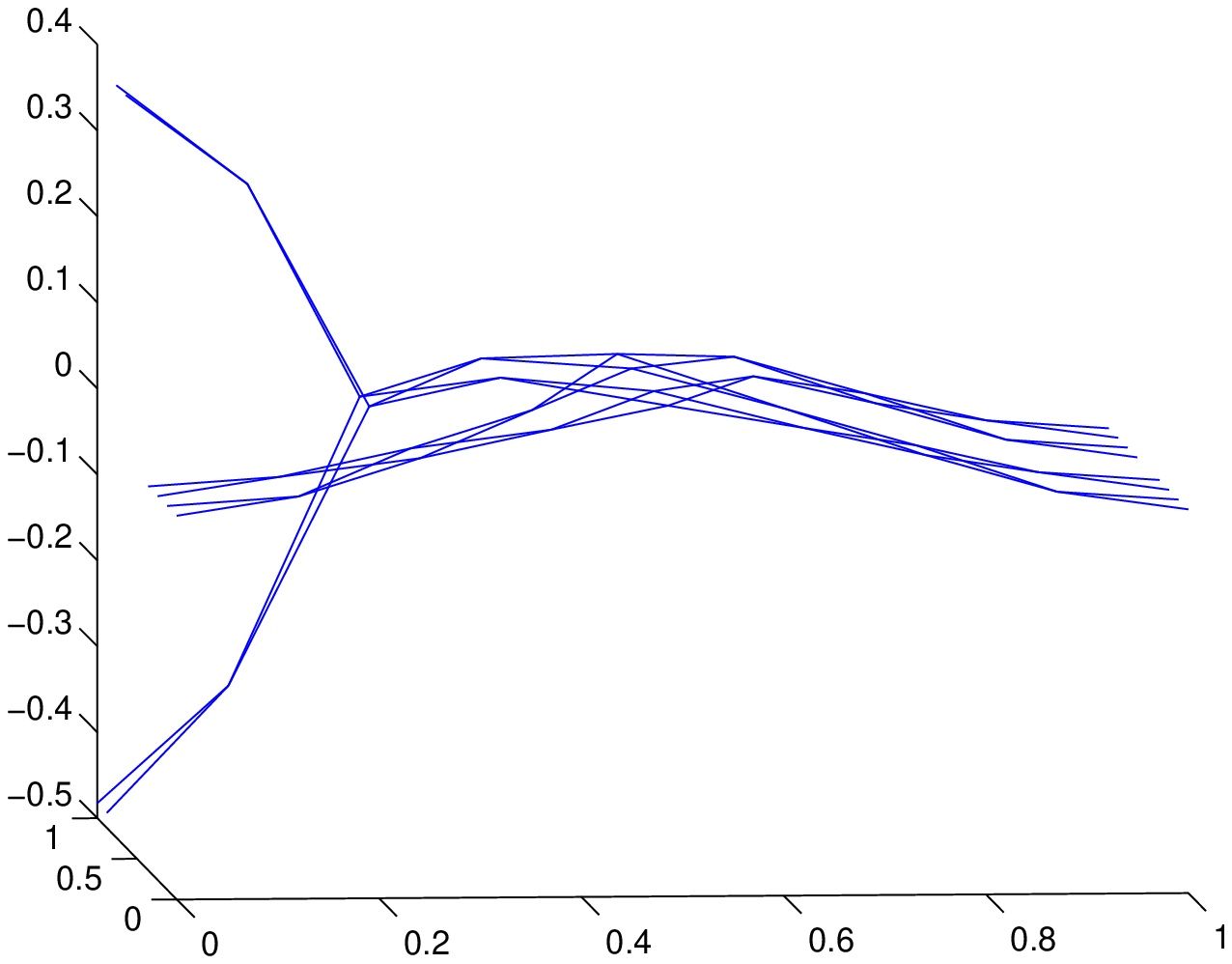}}
\caption{Eigenfunctions on the $n=3$, $j=2$ space}\label{eigf s2}
\end{figure}

Fix the Laakso space, $L$, where $j$ is the number of subintervals in the construction. The action of the operator $M_n \phi^{*}_{n,m}, n >m,$ on $f \in C(F_m)$ needs to be related to the action of $A_m$ to justify approximating $A$ by $M_n$ instead of $A_n$. First $\phi_{n,m}:F_n \rightarrow F_m$ is a surjection, so $\phi^{*}_{n,m}:C(F_m) \rightarrow C(F_n)$ by $\phi^{*}_{n,m}f = f \circ \phi_{n,m}$. Since $M_n$ is a second finite difference operator on the graph of $F_n$ it is seen that $M_n \phi^{*}_{n,m}$ for $n > m$ are the standard second finite difference operators on each line segment of $F_m$ which converge to second differentiation as $n$ goes to infinity. Then this holds for functions on any $F_m$ so the $M_n$ can be taken as suitable approximations to second differentiation. The operator $A_n$ (Definition \ref{def:A_n}) acts by second differentiation on each line segment of the quantum graph $F_m$. Then for fixed $j$ and $m$ and $f\in Dom(A_m)$
$$\lim_{n \rightarrow \infty} M_n \phi^{*}_{n,m} f = A_m f.$$
The proof of this statement is standard and omitted, but is essentially just a statement about approximating derivatives finite difference operators.

Using the matrix representations of $M_n$ given earlier in this section numerical estimates of the eigenfunctions and eigenvalues were computed using MATLAB's \emph{eigs} function which calls the ARPACK package, \cite{Arpack1998}. The ten lowest eigenvalues for each of the spaces with $j = 2,3,4,5,6,7$ are listed in Table \ref{tab:eigs} along with their observed multiplicities, and histograms given in Figure \ref{fig:hist}. In the next section we give a theoretical account of how these spectrums with multiplicities arise and the calculations to show the exact details for $j=2,3$ as the last result of this paper. Figure \ref{eigf s2} show two eigenfunctions labeled with eigenvalues for the Laakso space with $j=2$. These two pictures show how high multiplicity can be obtained since the eigenfunctions are only obeying the Kirchoff matching condition at the vertices of the graph. These matching conditions are weak enough so that functions which do not appear smooth at the vertices are still in the domain of the Laplacian.

\section{Spectrum of The Laplacian}\label{sec:spectrum}
In this section we use the framework from Theorem \ref{thm:orthodom} to verify that the calculations in the previous section are valid. Fix a Laakso space $L_j = L$, and the sequence of quantum graphs of which it is the projective limit, $\{F_n\}_{n=0}^{\infty}$, \cite{BarlowEvans2004,Steinhurst2009}. Let $\Phi_n:L \rightarrow F_n$ where $L$ is a Laakso space and $F_n$ one of the approximating quantum graphs be a projection map. It is possible to pull back function on $F_n$ to functions on $L$ by pre-composing with $\Phi_n$. I.e. if $f:F_n \rightarrow \mathbb{R}$ then $\Phi_n^{*}f := f \circ \Phi_n : L \rightarrow \mathbb{R}$. We can then take $A = A_j$ to be the self-adjoint operator with it's domain $Dom(A)$ where this operator is the projective limit of $A_n$ which acts by second differentiation on each line segment of $F_n$ with domain $Dom(A_n)$ as given in Definition \ref{def:A_n}. Recall also the orthogonalization of the domains of $A_n$ as $\C{D}'_n$ from Theorem \ref{thm:orthodom}.
 	
In the case of the Laakso spaces it will be useful to have a clear picture of what the projection maps $\phi_n$ do to a function from $C(F_{n-1})$ as it is being pulled back to an element of $C(F_n)$ to better understand what the orthogonal space is. Since $F_n$ is constructed from copies of $F_{n-1}$ modulo identifications, we can think of $F_n = F_{n-1} \times G_n$ so a function from $C(F_{n-1})$ when pulled back to $C(F_n)$ will be constant across $G_n$ for a given coordinate in $F_{n-1}$. However, at wormholes $G_n$ is identified to a single point so the pull back of a $C(F_{n-1})$ function will just have its value there. If we want to describe $\mathcal{D'}_n = \mathcal{D'}_{n-1}^{\perp} \cap \C{D}_n.$ we have to describe the functions on $F_n$ that are orthogonal to those on $F_{n-1}$. The functions pulled back from $C(F_{n-1})$ to $C(F_n)$ are constant across $G_n$ so the orthogonal functions would be those that average to zero across $G_n$ and at the $n-th$ level wormholes where $G_n$ is identified to a single point the orthogonal functions must be equal to zero.
 
In \cite{Laakso2000} a sequence of integers $\{j_i\}$ is associated to each Laakso space in the examples we calculated we have taken $j=j_i$ but we use the Laakso's notation for the generality since nowhere in the following proof do we use the fact that the $j_i$ are equal. Let $A=A_j$ be the self-adjoint operator for the Laakso space with $j_i=j$, and $A_n = A_{n,j}$ the self-adjoint operators on the approximating quantum graphs.

\begin{theorem}\label{thm:laaksospec}
Let the sequence $d_n = \prod_{j=1}^{n} j_i^{-1}$ $n \ge 0$ be associated to a given Laakso space, with Hausdorff dimension, $Q$, between one and two. Set $d_0 = 1$. Then the spectrum of $A$ on this Laakso space is 
$$\sigma(A) = \bigcup_{n=0}^{\infty} \bigcup_{k=1}^{\infty} \left\{ \frac{k^{2}\pi^{2}}{d_n^{2}} \right\} 
\cup
 \bigcup_{n=2}^{\infty} \bigcup_{k=1}^{\infty} \left\{ \frac{k^{2}\pi^{2}}{4d_n^{2}} \right\} 
\cup 
\bigcup_{n=1}^{\infty} \bigcup_{k=0}^{\infty} \left\{ \frac{(2k+1)^{2}\pi^{2}}{4d_n^{2}} \right\}.$$ 
\end{theorem}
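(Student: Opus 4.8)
The plan is to reduce everything to the orthogonalized pieces $\mathcal{D}'_n$ via Theorem \ref{thm:orthodom}, so that it suffices to identify $\sigma(A_n|_{\C{D}'_n})$ for each $n\ge 0$ and take the union. For the base level $n=0$ the operator is simply the Neumann Laplacian $-d^2/dx^2$ on the unit interval $F_0=[0,1]$, whose spectrum is $\{k^2\pi^2:k\ge 0\}=\{k^2\pi^2/d_0^2\}$ since $d_0=1$; this supplies the $n=0$ term of the first family, together with the ground state $\lambda=0$. Everything else must come from the higher levels.

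The key step is a geometric description of $\mathcal{D}'_n$ for $n\ge 1$. Using the product decomposition $F_n=F_{n-1}\times G_n$ noted before the theorem, where $G_n$ is the two-element fiber (``top row/bottom row'') collapsed to a point precisely at the level-$n$ wormholes, I would show that a function in $\mathcal{D}'_n$ is exactly one that averages to zero across $G_n$ fibrewise, hence is odd under the top/bottom exchange, and vanishes at every level-$n$ wormhole. The oddness has two consequences I would exploit: first, at any wormhole the two sheets meet with opposite-signed data, so the incoming one-sided derivatives cancel in pairs and the Kirchhoff condition of Definition \ref{def:A_n} is automatically satisfied; second, the equation $-f''=\lambda f$ then decouples into independent one-dimensional Sturm--Liouville problems on the line segments of $F_n$. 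Thus $A_n|_{\C{D}'_n}$ is unitarily equivalent to a direct sum of ordinary interval Laplacians, and its spectrum is the union of the corresponding interval spectra.

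Next I would classify these interval problems by the boundary data forced at their endpoints, which is dictated entirely by the position of the segment in the hierarchical cell structure. A segment of length $d_n$ lying between two points at which oddness forces $f=0$ carries Dirichlet--Dirichlet data and contributes $(k\pi/d_n)^2$, $k\ge 1$ (the first family). A segment of length $d_n$ abutting one of the global endpoints $x=0,1$, where $G_n$ is not collapsed and the Neumann condition survives, carries Dirichlet--Neumann data and contributes $((2k+1)\pi/(2d_n))^2$, $k\ge 0$ (the third family, which needs a genuine boundary edge and so first occurs at $n=1$). Finally, across an \emph{inherited} wormhole (one of level $<n$), $G_n$ is not collapsed, so an odd eigenfunction need not vanish there and may pass continuously through it; this glues two adjacent length-$d_n$ edges into a single Dirichlet--Dirichlet interval of length $2d_n$, contributing $k^2\pi^2/(4d_n^2)$, $k\ge 1$ (the second family, which requires an interior lower-level wormhole flanked by level-$n$ wormholes at distance $d_n$, a configuration first available at $n=2$). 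Solving each Sturm--Liouville problem and taking the union over $n$ yields exactly the three displayed families, and the index ranges together with the starting levels $n\ge 2$ and $n\ge 1$ fall out of this case analysis.

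The main obstacle I anticipate is the second step combined with completeness: pinning down $\mathcal{D}'_n$ precisely enough to read off \emph{all} the boundary conditions, and being sure the localized and pass-through eigenfunctions I write down actually span $\mathcal{D}'_n$, so that no eigenvalue is missed and none is spurious. This amounts to checking that the decomposition of $F_n$ into segments is exhaustive and that the odd-function constraint, together with the collapse pattern of the fibers $G_m$ for $m\le n$, leaves no further linear relations. A clean dimension count on the finite graph $F_n$, comparing $\dim\mathcal{D}'_n$ with the total multiplicity produced by the three families, is the natural way to close this gap, and is also what pins the multiplicities recorded in Table \ref{tab:eigs}.
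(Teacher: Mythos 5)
Your overall strategy coincides with the paper's: reduce to $\sigma(A_n|_{\mathcal{D}'_n})$ via Theorem \ref{thm:orthodom}, characterize $\mathcal{D}'_n$ as the functions that are odd under the exchange of the two level-$n$ copies of $F_{n-1}$ (hence vanish at the level-$n$ wormholes), and cut $F_n$ along these forced zeros. The genuine gap is in your treatment of the \emph{inherited} wormholes, those of level $m<n$. Such a point is a degree-four vertex: four edges of length $d_n$ meet there, two from each of the lines crossing at level $m$. Your claim that the eigenvalue equation ``decouples into independent one-dimensional Sturm--Liouville problems on the line segments'' fails exactly there, because the level-$n$ oddness gives no relation among the four branches at that vertex: the level-$n$ exchange maps this vertex to its mirror copy elsewhere in $F_n$, not to itself, so continuity and the Kirchoff condition at the center remain a genuine coupling of all four incident edges. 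Consequently the piece of $F_n$ cut out by the forced zeros that contains an inherited wormhole is a cross, not an interval, and your description of it as ``two adjacent length-$d_n$ edges glued into a single Dirichlet--Dirichlet interval of length $2d_n$'' does not by itself produce admissible eigenfunctions: a Dirichlet eigenfunction placed on one line through the center and extended by zero to the other two branches is not even continuous unless it vanishes at the center.

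The paper closes precisely this point by splitting functions on the cross into parts symmetric and antisymmetric under exchange of its two lines: the symmetric part is a Dirichlet problem on an interval of length $2d_n$ (your ``pass-through'' functions, extended to all four branches, with Kirchoff automatic), while the antisymmetric part is forced to vanish at the center and yields two Dirichlet intervals of length $d_n$. Your list of families is nevertheless correct \emph{as a set}, but only because of a coincidence your argument never records: the antisymmetric eigenvalues $k^{2}\pi^{2}/d_n^{2}$ are the even-indexed members of $\{k^{2}\pi^{2}/4d_n^{2}\}$, so nothing new appears; this is also what saves the case $j_n=2$, where there are no interior segments between two forced zeros at all and every non-boundary edge ends at a cross. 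Finally, the ``clean dimension count on the finite graph $F_n$'' you propose to certify completeness cannot work as stated, since $\mathcal{D}'_n$ is an infinite-dimensional function space; what is needed, and what the paper uses, is that the cut-up decomposition (with the correct cross analysis) reduces $A_n|_{\mathcal{D}'_n}$ to a direct sum of the three model problems, whose eigenfunctions are complete by standard one-dimensional spectral theory, while completeness across levels is already supplied by the density of $\bigcup_n \mathcal{D}'_n$ in $Dom(A)$ coming from the projective-limit construction.
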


\begin{proof}
The proof falls naturally into two parts. The first is to show that $A_n$ restricted to $\mathcal{D'}_n$ has the claimed spectrum. The second is to show that the union of $\sigma(A_n)$ contains all of $\sigma(A)$. Calculating exact multiplicities is left for the next theorem. 

The approximating graphs for Laakso spaces are introduced in \cite{Steinhurst2009} and have been reviewed Section \ref{sec:desc}. For example if $j_i = 3$ then we have the graph in Figure \ref{constructionpic2} as the second approximating quantum graph. The structure that we exploit to calculate the spectrum is that the graph can be broken down into simple configurations of line segments on which the behavior of $A_{n,j}$ is the usual second differentiation. All of the operators, $A_n$, have Kirchoff matching conditions which imposes Neumann boundary conditions since the boundary consists of degree one vertices. 

The functions in $\mathcal{D'}_n$ are those functions in the domain of the operator $A$ that are expressible as $\Phi_n^{*}f$ where $f \in Dom(A_n)$ where the values of $f$ summed over the copies of $F_{n-1}$ that $F_n$ is built from are all zero, especially at the $n-th$ level wormholes where all the copies of $F_{n-1}$ have been collapsed to single points $f$ must be equal to zero at these wormholes. Referring back to Figure \ref{constructionpic2}, if one considers the two most upper left line segments forming the sideways ``V''. An element of $\mathcal{D'}_2$ would have to equal zero at the second level wormhole, i.e. the apex of the V. It must also sum to zero over the copies of $F_1$ from which this graph is made meaning that the value of $f$ on the upper branch of the V must be opposite to the value of $f$ on the lower branch. On this ``V,'' the part of the function that is symmetric across the two branches is in $\mathcal{D'}_1$, while the anti-symmetric part is in $\mathcal{D'}_2 \subset \C{D}_2$.

The zeros at the $n'th$ level wormholes break the graph $F_n$ into three kinds of pieces. The first are those segments at the boundary which have length $d_n$, Neumann boundary conditions on the boundary end and Dirichlet boundary conditions on the interior end. If $j_n> 2$ then we have interior intervals, such as those that form the loops in Figure \ref{constructionpic2}, which again have length $d_n$ butthese have Dirichlet boundary conditions at each end. The third configuration is a cross, four segments of length $d_n$ joined at a common point and Dirichlet boundary conditions at the four outer vertices. These three configurations can be seen in Figure \ref{fig:pieces}. In the cross we must note that the upper and lower branches are from different copies of $F_{n-1}$ so $f$ is not forced to equal $-g$ for functions in $\C{D'}_n$ but we will consider the symmetric and anti-symmetric cases where $f = g$ and $f = -g$ from which any function on the cross is a combination. If $f=g$ then the cross is essentially just an interval of length $2d_n$ with Dirichlet boundary condition and if $f = -g$ then at the intersection $f=0$ and we have two intervals of length $d_n$ with Dirichlet boundary conditions. 

\begin{figure}[t]
\begin{center}
	\includegraphics[scale=.6]{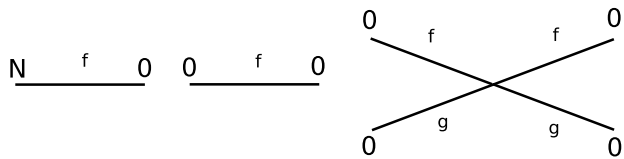}
\caption{The three types of pieces that the orthogonality condition creates in $F_n$.}
\label{fig:pieces}
\end{center}
\end{figure}

Note that the crosses do not appear for $n=0$ or $n=1$ for any $j_0$ or $j_1$. The intervals with Dirichlet conditions do not appear in $n=0$ or when $j_n = 2$. The intervals with mixed boundary conditions do not appear when $n=0$ either. What does appear when $n=0$ is an interval of unit length and both ends having Neumann boundary conditions. When all of these are put together and their spectra found by the usual methods we get the claimed spectra where the contributions towards multiplicity from each of the pieces may be zero occasionally, i.e. $j_i = 2$ so there are no intervals of length $d_n$ and Dirichlet boundaries except those which are part of the crosses. 

The last claim that these are all of the eigenvalues of $A$ on $L$ follows from the fact that $\bigcup_{n=0}^{\infty} \mathcal{D'}_n$ is dense in $Dom(A_j)$ by the projective limit construction of the operator $A$. So if there were another eigenvalue unaccounted for there would be at least a one dimensional subspace of $Dom(A)$ orthogonal to a dense subset of $Dom(A)$, since this can't happen the entire spectrum is now accounted for.
\end{proof}

There is a more general statement for higher dimension that replaces the crosses with $2^{k+1}$ line segments of length $d_n$ on each side of a central vertex where $k$ is the number of Cantor sets in the construction of the Laakso space as in \cite{Laakso2000}. The spectrum given in these higher dimensions is given by the same expression but the multiplicity counting as in the following Proposition has to be redone for each $k$. 

\begin{prop}
The first ten eigenvalues of $A_j$ and their multiplicity for the $j=2$ and $j=3$ constructions determined from Theorem \ref{thm:laaksospec}\ agree with the computed values in Table \ref{tab:eigs}.
\end{prop}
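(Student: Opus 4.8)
The plan is to specialize Theorem~\ref{thm:laaksospec} to the constant sequence $j_i=j$, so that $d_n=j^{-n}$, and then to check the two columns of Table~\ref{tab:eigs} separately: first the eigenvalue locations, which is immediate, and then the multiplicities, which is the substantive part. Substituting $d_n=j^{-n}$ turns the three families into $\{k^2 j^{2n}\pi^2\}$ for $n\ge 0$, $\{\frac{1}{4}k^2 j^{2n}\pi^2\}$ for $n\ge 2$, and $\{\frac{1}{4}(2k+1)^2 j^{2n}\pi^2\}$ for $n\ge 1$, so every eigenvalue has the form $(m\pi)^2$ with $m$ equal to $j^{n}k$, $\frac{1}{2}j^{n}k$, or $\frac{1}{2}j^{n}(2k+1)$. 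For $j=2$ these $m$ collapse to the positive integers, and listing the ten smallest values together with $0$ gives exactly $0,\pi^2,(2\pi)^2,\ldots,(9\pi)^2$; for $j=3$ the half-integer values $1.5,4.5,7.5,\ldots$ enter through the $n=1$ mixed family while $2.5,3.5,\ldots$ never occur, reproducing the $j=3$ column. Comparing $(m\pi)^2$ numerically against the computed $\lambda$'s finishes the location check.

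The multiplicities are the real content, and I would organize the count around the piece decomposition already established in the proof of Theorem~\ref{thm:laaksospec}. For fixed $n$ the restriction $A_n|_{\mathcal{D}'_n}$ splits as a direct sum over the three piece types, so the multiplicity of a given $(m\pi)^2$ is the number of triples (level $n$, piece, mode $k$) realizing it. First I would determine, for each $n$, the number $B_n$ of boundary (Neumann--Dirichlet) segments, the number $L_n$ of Dirichlet--Dirichlet intervals, and the number $C_n$ of crosses. These obey a recursion coming from the self-similar construction: passing from $F_{n-1}$ to $F_n$ replicates each vertex and creates one new high-valence vertex at every subdivided edge, so $B_n$, $L_n$, $C_n$ are read off from the degree sequence of $F_{n-1}$ (leaves give boundary segments, degree-two vertices give intervals, and the crossing vertices give crosses). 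I would then record the spectrum of each piece: the length-$d_n$ intervals feed the first and third families, while a cross of arm-length $d_n$ contributes $\frac{(2k+1)^2\pi^2}{4 d_n^2}$ with multiplicity one (the mode symmetric across all four arms) and $\frac{k^2\pi^2}{d_n^2}$ with multiplicity three (the mean-zero modes of the four arms). Summing these contributions over all $n$ for each of the ten target eigenvalues and comparing with the $m$ columns is then a finite arithmetic check.

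The hard part will be the combinatorics of $C_n$, and $L_n$ for $j\ge 3$, together with getting the bookkeeping to reproduce the large multiplicities exactly. For $j=2$ the values $(4\pi)^2$ and $(8\pi)^2$ carry multiplicities $18$ and $66$ precisely because a value $(m\pi)^2$ with $m$ divisible by a high power of $j$ is hit simultaneously by the cross modes of every level below that power, so the count accumulates $1+\sum_n 3C_n$ with $C_n$ growing geometrically; similarly the multiplicity $8$ at $(4.5\pi)^2$ for $j=3$ mixes the $n=1$ boundary family with the $n=2$ cross and boundary families. I expect the delicate points to be the closed form for $C_n$, the behavior at small $n$ where crosses are absent and where some lowest modes must be tracked carefully, and the fact that the symmetric cross mode $\frac{k^2\pi^2}{4 d_n^2}$ coincides at even $k$ with the interval modes $\frac{k^2\pi^2}{d_n^2}$, so overlapping families must be merged before the multiplicities are read off. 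Once $B_n$, $L_n$, $C_n$ are pinned down, these overlaps are the only remaining source of error and the verification is mechanical.
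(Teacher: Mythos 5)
Your overall strategy is the same as the paper's: specialize $d_n=j^{-n}$, decompose $A_n|_{\mathcal{D}'_n}$ into boundary (Neumann--Dirichlet) segments, Dirichlet--Dirichlet intervals, and crosses, compute each piece's spectrum, and compile level by level while merging coincident families. Your cross computation (one symmetric mode at $\frac{(2k+1)^2\pi^2}{4d_n^2}$, three mean-zero modes at $\frac{k^2\pi^2}{d_n^2}$) is exactly the paper's identity $\sigma_0^0[X,d]_0^0=\sigma^0[0,2d]^0+2\,\sigma^0[0,d]^0$. The only structural difference is that the paper never sets up your recursion: it draws $F_1,\dots,F_4$ (for $j=2$) and $F_1,F_2$ (for $j=3$), reads the piece counts off the pictures, and stops there, which suffices because (as you both note) only finitely many levels can contribute below the tenth eigenvalue.

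The genuine gap is your rule for the interval count $L_n$. The approximating quantum graphs have \emph{no} degree-two vertices: every vertex of $F_{n-1}$ is a leaf (degree $1$) or a wormhole (degree $4$), so ``degree-two vertices give intervals'' returns $L_n\equiv 0$. Relatedly, passing from $F_{n-1}$ to $F_n$ puts $j-1$ new wormholes on \emph{each} edge of $F_{n-1}$, not one; your description is correct only for $j=2$. The correct count is $L_n=(j-2)\,E_{n-1}$ with $E_{n-1}=(2j)^{n-1}$ the number of edges of $F_{n-1}$: cutting an edge at its $j-1$ new wormholes leaves $j-2$ middle subsegments with Dirichlet ends, while the two outer subsegments become boundary pieces or cross arms. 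With $L_n\equiv 0$ your check fails already at the fifth entry of the $j=3$ column: the tabulated multiplicity $2$ at $88.83=(3\pi)^2$ requires the single Dirichlet interval of length $\frac13$ in $\sigma_1$ on top of the pulled-back mode $\cos(3\pi x)$, and you would get $1$. Your rules for $B_n$ (leaves of $F_{n-1}$, giving $2^n$) and $C_n$ (wormholes of $F_{n-1}$, giving $C_n=W_{n-1}$ with $W_n=2W_{n-1}+(j-1)(2j)^{n-1}$, i.e.\ $1,6,28$ for $j=2$) are sound and reproduce the paper's counts.

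One warning before you do the final arithmetic: carried out correctly, the compilation gives multiplicity $6$ at $(2\pi)^2$ for $j=2$ (one pulled-back mode, four boundary segments of length $\frac14$, one symmetric cross mode), whereas Table 1 lists $3$; every other entry of the $j=2$ column does come out as tabulated ($3,3,18,3,6,3,66,3$). So do not expect a perfect match at that one entry, and do not treat a mismatch there as evidence against your count --- the tension is already present in the paper's own decomposition $\sigma_2=4\times\sigma^N[0,\frac14]^0+\sigma_0^0[X,\frac14]_0^0$, which contributes $5$ there. (Similarly, the paper's $\sigma_2$ for $j=3$ lists $3$ Dirichlet intervals where the edge count gives $6$, though that discrepancy lies above the tenth eigenvalue and is invisible in the table.)
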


\begin{proof}
It is immediate from the spectrum of $A_{n,j}$ on a given Laakso space as determined in Theorem \ref{thm:laaksospec} that the lowest eigenvalue is increasing with $n$, so to find the first ten eigenvalues only a small number of approximating graphs are necessary.

Let us establish some notation. Let $\sigma^{N}[0,d]^{N}$ be the spectrum of the usual second derivative on an interval of length $d$. If either, or both, of the $N$'s are replaced by $0$ then Dirichlet boundary conditions are indicated instead. Denote by $\sigma_0^{0}[X,d]_0^{0}$ the spectrum of second differentiation on a cross where the four intervals have length $d$ and Dirichlet boundary conditions at the four ends points. If we write $2 \times \sigma_0^{0}[X,d]_0^{0}$ the two denotes the multiplicity of these eigenvalues. 

For $j=2$, we get from drawing the first few approximating graphs and forming the function spaces $\mathcal{D'}_n$ and calculating the spectrum of $A_{n,j}$ on each of these spaces we get the following table. Let $\sigma_n$ be the spectrum of the $A_{n,j}$ operator on the function space $\mathcal{D'}_n.$

\begin{eqnarray*}
	\sigma_0 &=& 1 \times  \sigma^{N}[0,1]^{N}\\
	\sigma_1 &=& 2 \times \sigma^{N}[0,\frac{1}{2}]^{0}\\
	\sigma_2 &=& 4 \times \sigma^{N}[0,\frac{1}{4}]^{0} + 1 \times \sigma_0^{0}[X,\frac{1}{4}]_0^{0}\\
	\sigma_3 &=& 8 \times \sigma^{N}[0,\frac{1}{8}]^{0} + 6 \times \sigma_0^{0}[X,\frac{1}{8}]_0^{0}\\
	\sigma_4 &=& 16 \times \sigma^{N}[0,\frac{1}{16}]^{0} + 28 \times \sigma_0^{0}[X,\frac{1}{16}]_0^{0}
\end{eqnarray*}

It is easy to check that $1 \times \sigma_0^{0}[X,d]_0^{0} = 1 \times^{0}[0,2d]^{0} + 2 \times \sigma^{0}[0,d]^{0}$. Using the known spectra for second differentiation on intervals the first ten eigenvalues are readily computed along with their multiplicities which do match those found in Table \ref{tab:eigs}. In the case where $j=3$ we need fewer approximating graphs to account for the first ten eigenvalues. The spectra of the Laplacian on the first few functions spaces $\mathcal{D'}_n$ are:

\begin{eqnarray*}
	\sigma_0 &=& 1 \times \sigma^{N}[0,1]^{N}\\
	\sigma_1 &=& 2 \times \sigma^{N}[0,\frac{1}{3}]^{0} + 1 \times \sigma^{0}[0,\frac{1}{3}]^{0}\\ 
	\sigma_2 &=& 4 \times \sigma{N}[0,\frac{1}{9}]^{0} + 3 \times \sigma^{0}[0,\frac{1}{9}]^{0} + 2 \times \sigma_0^{0}[X,\frac{1}{9}]_0^{0}
\end{eqnarray*}
When these spectra are written out with the indicated multiplicities and compiled into a single list we get the same eigenvalues and multiplicities listed in Table \ref{tab:eigs}.
\end{proof}

\end{document}